\documentclass[10pt]{article}

\usepackage[T1]{fontenc}
\usepackage{lmodern}

\usepackage{amsmath}
\usepackage{amsthm}
\usepackage{amssymb}
\usepackage{latexsym}
\usepackage{nicefrac}

\usepackage[small, pagestyles]{titlesec}
\usepackage[small, it]{caption}

\usepackage[square,comma,numbers,sort&compress]{natbib}

\usepackage{url}
\usepackage{csquotes}
\MakeOuterQuote{"}

\usepackage{tikz}
\usetikzlibrary{calc,shapes,plotmarks}

\usepackage{footmisc}
\usepackage{enumitem}

\usepackage{color}
\definecolor{lightgray}{rgb}{0.8,0.8,0.8}
\definecolor{darkgray}{rgb}{0.7,0.7,0.7}

\usepackage[bookmarks]{hyperref}
\hypersetup{
	colorlinks=true,
	linkcolor=black,
	anchorcolor=black,
	citecolor=black,
	urlcolor=black,
	pdfpagemode=UseThumbs,
	pdftitle={An assortment of problems in permutation patterns},
	pdfsubject={Combinatorics},
	pdfauthor={Vincent Vatter},
}

\theoremstyle{plain}
\newtheorem{theorem}{Theorem}[section]
\newtheorem{proposition}[theorem]{Proposition}
\newtheorem{corollary}[theorem]{Corollary}
\newtheorem{conjecture}[theorem]{Conjecture}
\newtheorem{question}[theorem]{Question}
\newtheorem{problem}[theorem]{Problem}

\theoremstyle{definition}

\renewenvironment{abstract}
{
	\begin{list}{}%
	{\setlength{\rightmargin}{1in}%
		 \setlength{\leftmargin}{1in}%
	}%
	\item[]\ignorespaces\begin{small}
}
{
	\end{small}\unskip\end{list}
}

\newcommand{\Av}{\operatorname{Av}}
\newcommand{\C}{\mathcal{C}}

\newcommand{\id}{\mathrm{id}}

\newcommand{\OEISlink}[1]{\href{http://oeis.org/#1}{#1}}

\newcommand{\absdot}[1]{%
	\node[fill,circle,inner sep=0pt,minimum size=4pt] at #1 {};%
}

\newcommand\absdotencircle[1]{%
	\absdot{#1}
	\node[draw,thick,circle,inner sep=0pt,minimum size=8pt] at #1 {};%
}

\newcommand{\plotperm}[1]{%
	\foreach \j [count=\i] in {#1} {%
		\absdot{(\i,\j)}{}
	}
}

\newcommand{\plotpartialperm}[1]{%
	\foreach \i/\j in {#1} {%
		\absdot{(\i,\j)}{}
	}
}

\newcommand{\plotpartialpermencircle}[1]{%
	\foreach \i/\j in {#1} {%
		\absdotencircle{(\i,\j)}{}
	}
}

\newcommand{\plotpermbox}[4]{%
	\draw [thick, line cap=round]
		({#1-0.5},{#2-0.5}) rectangle ({#3+0.5},{#4+0.5});
}

\newpagestyle{main}[\small]{
	\headrule
	\sethead[\usepage][][]
		{\sc An Assortment of Problems in Permutation Patterns}{}{\usepage}
}

\title{\sc An Assortment of Problems in Permutation Patterns: Unimodality, Equivalence, Derangements, and Sorting}

\author{
	Vincent Vatter%
	\thanks{Department of Mathematics, University of Florida,
	Gainesville, Florida, USA.}
}

\date{\today}

\titleformat{\section}
	{\large\sc}
	{\thesection.}{1em}{}
  
\begin{document}
\maketitle
\pagestyle{main}

\begin{abstract}
We collect open problems in permutation patterns on four themes: rank-unimodality in the permutation pattern poset, Wilf-equivalence and shape-Wilf-equivalence, the enumeration of derangements in permutation classes, and sorting by stacks in series, generalized stacks, and restricted containers ($\mathcal{C}$-machines).
\end{abstract}

\section{Introduction}

This paper accompanies a talk I gave at the pre-conference workshop for early career researchers at \emph{Permutation Patterns 2025}, the 23rd year of the conference series, held at the University of St Andrews and organized by Christian Bean and Ruth Hoffmann. It is not a list of the best-known or most difficult open problems in permutation patterns, but simply an assortment of problems on a few themes: problems I have encountered, wondered about, or been asked about. Many are of the ``someone really ought to...'' variety. A few are folklore, and some may be trivial. This is not meant as a roadmap for the field; describing what I feel are the deepest or most important open problems would require a different approach and considerably more buildup. Think of this instead as a collection of interesting attractions that do not take too long to reach.

The best-known open problem in the field is surely the enumeration of $1324$-avoiding permutations, sequence \OEISlink{A061552} in the OEIS~\cite{sloane:the-on-line-enc:}. This one is easy to reach, but many have looked and progress has been hard to come by. Asked why, Zeilberger replied, ``Because \emph{life is hard}. The few combinatorial objects that we can count exactly are the trivial ones''~\cite{mansour:interview-with-:zeilberger} (emphasis in original). There are, in any case, plenty of places to read more about this conundrum~\cite{marinov:counting-1324-a:,claesson:upper-bounds-fo:,bona:a-new-upper-bou:,johansson:using-functiona:,bona:a-new-record-fo:,egge:defying-god:-th:,conway:on-the-growth-r:,bevan:a-large-set-of-:,conway:1324-avoiding-p:,bevan:a-structural-ch:}. We have nothing new to add about this problem, and so turn to areas that seem more tractable, or at least where we have less evidence of intractability.

The primary venue for results in permutation patterns, and for the exchange of open problems, has been the \emph{Permutation Patterns} conference series, founded in 2003 by Michael Albert and Michael Atkinson at the University of Otago in Dunedin, New Zealand. Atkinson, who introduced Albert to the subject~\cite{mansour:interview-with-:}, drew many of the field's current researchers into permutation patterns; his retirement was honored at \emph{Permutation Patterns 2013}. Albert and Atkinson, and the conference series they founded, have shaped the research agenda considerably, and the conference's tradition of open problems sessions has been particularly influential. For his part, Albert became a central figure in permutation patterns, known for his computational and collaborative approach to the subject, and retired in 2024. In this spirit, many of the problems here beg for computation, either because data is the goal, or just to better illuminate the path forward.

There are earlier collections of open problems that the reader may also find valuable. Wilf, widely regarded as a founder of the field and the plenary speaker at the inaugural \emph{Permutation Patterns} conference in 2003, wrote a survey in 1999~\cite{wilf:the-patterns-of:} at an early stage in its development; I particularly enjoyed reading it as a graduate student. In that survey, Wilf expressed doubt about the Stanley--Wilf conjecture due to recent results of Alon and Friedgut~\cite{alon:on-the-number-o:}; his doubt was proved wrong a few years later when Marcus and Tardos~\cite{marcus:excluded-permut:} proved it. Wilf also recounted Stanley's skepticism about the Noonan--Zeilberger conjecture~\cite{noonan:the-enumeration:} that every finitely based permutation class has a D-finite generating function; that skepticism was vindicated many years later when Garrabrant and Pak~\cite{garrabrant:permutation-pat:} disproved the conjecture.

Another valuable resource is Steingr{\'\i}msson's 2013 survey~\cite{steingrimsson:some-open-probl:}, which covers a broad range of topics, including the M\"{o}bius function of the permutation pattern poset, topological properties of intervals, vincular and mesh patterns, and the structure of growth rates. Steingr{\'\i}msson is himself a leader in the field who has trained many of its current researchers; he delivered the plenary address at \emph{Permutation Patterns~2009}.

For readers seeking broader introductions to permutation patterns, several general references are available. Kitaev's \emph{Patterns in Permutations and Words}~\cite{kitaev:patterns-in-per:} is a comprehensive compendium of the field. B\'ona's undergraduate textbook \emph{A Walk Through Combinatorics}~\cite{bona:a-walk-through-:} contains a very accessible introduction to the area, while his monograph \emph{Combinatorics of Permutations}~\cite{bona:combinatorics-o:} treats the subject in greater depth. Finally, I can't help but recommend my own survey~\cite{vatter:permutation-cla:} that appears in the \emph{Handbook of Enumerative Combinatorics}.

The remainder of the paper is organized as follows. Section~\ref{sec:unimodality} concerns rank-unimodality of intervals in the permutation pattern poset. Section~\ref{sec:wilf} discusses symmetries and Wilf-equivalence. Section~\ref{sec:derangements} addresses the enumeration of derangements in permutation classes. Section~\ref{sec:sorting} looks at sorting machines. Below, we collect the basic definitions and offer some remarks on permutations as relational structures.

\subsection*{Basic definitions}

The basic notions of permutation patterns are easily stated. With apologies to those who view permutations as elements of a Coxeter group (Tenner~\cite{tenner:pattern-avoidan:}), we use the term \emph{length} to mean the number of entries in a permutation, which we denote by $|\pi|$. We identify a permutation~$\pi$ with its \emph{plot}: the set of points $\{(i, \pi(i))\}$ in the plane. When we speak of an entry being to the left or right of, or above or below, another entry, we refer to their relative positions in the plot. Similarly, an entry lies to the \emph{southeast} of another if it is both to the right and below, and so on for the other cardinal directions.

\begin{figure}
\begin{footnotesize}
\begin{center}
	\begin{tabular}{ccccc}
	\begin{tikzpicture}[scale=0.2, baseline=(current bounding box.center)]
		\plotpermbox{0.5}{0.5}{5.5}{5.5}
		\plotperm{3,2,5,1,4}
	\end{tikzpicture}
	&
	\begin{tikzpicture}[baseline=(current bounding box.center)]
		\node {$\sim$};
	\end{tikzpicture}
	&
	\begin{tikzpicture}[scale=0.2, baseline=(current bounding box.center)]
		\plotpermbox{0.5}{0.5}{9.5}{9.5}
		\plotpartialperm{1/3, 3/2, 4/9, 7/1, 8/8}
	\end{tikzpicture}
	&
	\begin{tikzpicture}[baseline=(current bounding box.center)]
		\node {$\le$};
	\end{tikzpicture}
	&
	\begin{tikzpicture}[scale=0.2, baseline=(current bounding box.center)]
		\plotpermbox{0.5}{0.5}{9.5}{9.5}
		\plotperm{3,6,2,9,5,7,1,8,4}
		\plotpartialpermencircle{1/3, 3/2, 4/9, 7/1, 8/8}
	\end{tikzpicture}
	\end{tabular}
\end{center}
\end{footnotesize}
\caption{The permutation $\sigma = 32514$ (left) is contained in $\pi = 362957184$ (right). The circled entries in~$\pi$ form the subsequence $32918$, which is order-isomorphic to $\sigma$. The permutation~$\pi$ avoids $4321$ because it has no decreasing subsequence of length four.}
\label{fig:plot:example}
\end{figure}
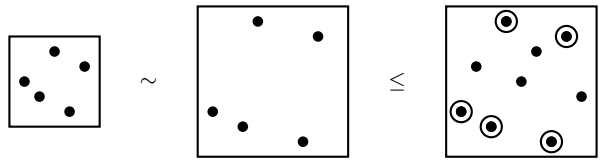

A permutation $\sigma$ is \emph{contained} in~$\pi$, written $\sigma \le \pi$, if~$\pi$ has a subsequence that is \emph{order-isomorphic} to $\sigma$, meaning its entries appear in the same relative order; otherwise~$\pi$ \emph{avoids} $\sigma$. In terms of plots, $\sigma \le \pi$ means that some selection of points from the plot of~$\pi$, when rescaled, yields the plot of $\sigma$. Figure~\ref{fig:plot:example} illustrates these definitions. This containment relation is a partial order, and we refer to it as the \emph{permutation pattern order}.

A \emph{permutation class} is a downset (or order ideal) in this poset: a set $\mathcal{C}$ of permutations such that if $\pi \in \mathcal{C}$ and $\sigma \le \pi$, then $\sigma \in \mathcal{C}$. We denote by $\C_n$ the set of permutations of length~$n$ in a class $\mathcal{C}$. Every permutation class $\mathcal{C}$ is characterized by its \emph{basis}, the set of minimal permutations not in $\mathcal{C}$. We write $\Av(B)$ for the class of permutations avoiding every element of $B$. The basis of a class is necessarily an \emph{antichain} (a set of pairwise incomparable permutations), and since the permutation pattern order contains infinite antichains, bases can be infinite.

\subsection*{Permutations as relational structures}

The study of permutation patterns fits naturally into the broader combinatorics of relational structures. In terms of the plot, a permutation is a set of~$n$ points equipped with two total orders: the left-to-right order (by $x$-coordinate) and the bottom-to-top order (by $y$-coordinate). As Cameron~\cite{cameron:homogeneous-per:} observes, this perspective clarifies what it means for one permutation to be contained in another: containment is simply the induced substructure relation, the same notion studied in the context of graphs (induced subgraphs), posets (induced subposets), integer partitions (Young's lattice), and other combinatorial objects.

This point of view will be relevant in the next section, where we compare unimodality results for permutations with analogous results for partitions and words. In most of those cases, the containment order is the natural one for relational structures of that type, and the questions (and answers) turn out to be surprisingly different.

\section{Unimodality}
\label{sec:unimodality}

We begin with what is perhaps the easiest of these problems to state, though it may well be among the hardest to prove. A finite sequence $a_0, a_1, \dots, a_n$ is said to be \emph{unimodal} if there exists an index~$k$ such that
\[
    a_0 \le a_1 \le \cdots \le a_k \ge \cdots \ge a_n;
\]
that is, the sequence weakly increases to its maximum and then weakly decreases. A polynomial is said to be \emph{unimodal} if its sequence of coefficients is unimodal. There are scores of unresolved unimodality questions in combinatorics and algebra; for a sample, the reader is referred to Stanley's~1986 survey~\cite{stanley:log-concave-and:}, many of whose conjectures remain open.

Given a permutation~$\pi$ of length~$n$, let $a_k$ denote the number of permutations of length $k$ contained in~$\pi$. Is the sequence $a_0, a_1, \dots, a_n$ necessarily unimodal? To state this in a more refined form, we recall a few definitions. The set of all permutations contained in a given permutation~$\pi$ forms, under the containment order, a \emph{principal downset} (also called a \emph{principal order ideal}). This is a ranked poset, with the rank of a permutation given by its length. A ranked poset is said to be \emph{rank-unimodal} if the sequence counting the number of elements at each rank is unimodal.

\begin{conjecture}
\label{conj-unimodal-downsets}
Every principal downset in the permutation pattern poset is rank-unimodal.
\end{conjecture}

This phenomenon may have first been noted in the permutation pattern context by Murphy, who wrote at the end of his 2002 thesis~\cite[p.~348]{murphy:restricted-perm:}, emphasis added:
\begin{quote}
There exists a program that takes a permutation of arbitrary length (usually about 24) and returns the set of all permutations involved in it, sorted by length. This is a simple but somewhat expensive way of finding the basis of the closure of a given permutation or set of permutations. \emph{The nicest thing about the program is the waisted shape of the output.} Here is the number of permutations of each length in one permutation of length~17:
\[
    1,\ 1,\ 2,\ 5,\ 14,\ 36,\ 87,\ 210,\ 486,\ 927,\ 1315,\ 1348,\ 1005,\ 549,\ 218,\ 61,\ 11,\ 1.
\]
\end{quote}

We may go further and ask whether all intervals in this poset are rank-unimodal. The \emph{interval} $[\sigma, \pi]$ is the set of all permutations~$\tau$ satisfying $\sigma \le \tau \le \pi$. It was in this generality that McNamara and Steingr{\'i}msson posed the following conjecture, which subsumes Conjecture~\ref{conj-unimodal-downsets}.

\begin{conjecture}[{McNamara and Steingr{\'i}msson~\cite[Conjecture~9.4]{mcnamara:on-the-topology:}}]
\label{conj-unimodal-intervals}
Every interval in the permutation pattern poset is rank-unimodal.
\end{conjecture}

\subsection*{Analogous structures}

Before surveying what is known for the permutation pattern poset, we briefly consider analogous questions for other combinatorial structures. The picture that emerges is mixed: rank-unimodality holds in some settings but fails in others, which leaves the permutation case genuinely uncertain.

In the \emph{consecutive} pattern poset on permutations, rank-unimodality of intervals was established by Elizalde and McNamara~\cite{elizalde:the-structure-o:}. However, their proof relies on the particularly constrained nature of consecutive containment (every permutation covers at most two others in this order) and does not appear to shed light on the standard (not-necessarily-consecutive) permutation pattern order.

For integer partitions, the situation is murkier. The \emph{containment order} on partitions is defined by~${\mu \le \lambda}$ if the Ferrers diagram of $\mu$ fits inside that of $\lambda$; equivalently, if $\mu_i \le \lambda_i$ for all $i$, padding with zeros as needed. This order makes the set of all partitions into a distributive lattice, called \emph{Young's lattice}. The principal downset generated by the $m \times n$ rectangular partition $(m, m, \dots, m)$ consisting of~$n$ parts each equal to~$m$ is denoted by $L(m,n)$, and its rank-generating function is a Gaussian polynomial. Gaussian polynomials have been known to be unimodal since the mid-nineteenth century, although a combinatorial proof was not given until O'Hara~\cite{ohara:unimodality-of-:} in 1990, later exposited colorfully by Zeilberger~\cite{zeilberger:kathy-oharas-co:}.

One might hope that all principal downsets in Young's lattice are rank-unimodal. Indeed, in 1986 Pouzet and Rosenberg~\cite[p.~367]{pouzet:sperner-propert:} asked something far more general: are principal downsets in the natural ordering on any type of finite relational structures rank-unimodal? This was answered in the negative only a year later by Stanton~\cite{stanton:unimodality-and:}, who exhibited a non-unimodal principal downset in Young's lattice. The partition $(8,8,4,4)$ has rank sequence
\[
    1,\ 1,\ 2,\ 3,\ 5,\ 6,\ 9,\ 11,\ 15,\ 17,\ 21,\ 23,\ 27,\ 28,\ \underline{31,\ 30,\ 31},\ 27,\ 24,\ 18,\ 14,\ 8,\ 5,\ 2,\ 1.
\]
Stanton's counterexample for integer partitions also precludes any thoughts of a graph analogue of Conjecture~\ref{conj-unimodal-downsets}, because it shows that the enumeration of induced subgraphs of the graph $K_8\cup K_8\cup K_4\cup K_4$ is not unimodal.

Although the answer to their question was negative, Pouzet and Rosenberg~\cite[Corollary~2.11]{pouzet:sperner-propert:} did establish what might be called the ``first half'' of rank-unimodality: in any principal downset of a finite relational structure, the rank sequence is weakly increasing up to at least the middle rank. Combined with the symmetry of Young's lattice, this gives another proof of the unimodality of the Gaussian polynomials. Specialized to permutations, it guarantees that the rank sequence of any principal downset in the permutation pattern poset increases up to at least half its length.

Words over a finite alphabet under the subword order are much better behaved. Chase~\cite{chase:subsequence-num:} proved that principal downsets are not merely rank-unimodal but rank-log-concave, a stronger property we now define.

\subsection*{Log-concavity}

A sequence $a_0,a_1,\dots,a_n$ of nonnegative terms is said to be \emph{log-concave} if ${a_k^2 \ge a_{k-1} a_{k+1}}$ for all ${1 \le k \le n-1}$, and a polynomial $p(x) = a_0 + a_1 x + \cdots + a_n x^n$ is \emph{log-concave} if its sequence of coefficients is log-concave. For positive sequences, log-concavity implies unimodality because it shows that the ratios $a_k/a_{k-1}$ are weakly decreasing, so once this ratio drops below $1$, it remains below $1$.

We cannot hope for log-concavity in the permutation pattern poset. The rank sequence of every nontrivial downset begins $1, 1, 2, \dots$, but $1^2 < 1 \cdot 2$ so these sequences are not log-concave. Even ignoring the empty permutation, sequences beginning $1, 2, 5$ or $1, 2, 6$ (both common) fail log-concavity because $2^2 < 1 \cdot 5$.

Nevertheless, log-concavity can play a key role in establishing unimodality, via the following classical result due to Ibragimov~\cite{ibragimov:on-the-composit:} and Keilson and Gerber~\cite{keilson:some-results-fo:}.%
\footnote{It should be remarked that the product of two unimodal polynomials is not necessarily unimodal. For example, $1 + x + 3x^2$ is unimodal but $(1 + x + 3x^2)^2 = 1 + 2x + 7x^2 + 6x^3 + 9x^4$ is not.}

\begin{proposition}
\label{prop:log-concave-uni-convolution}
A polynomial $p(x)$ with positive coefficients is log-concave if and only if $p(x) q(x)$ is unimodal for every unimodal polynomial $q(x)$.
\end{proposition}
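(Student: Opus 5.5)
We prove the two directions separately. Throughout, "positive coefficients" means $p(x)=\sum_{i=0}^m a_i x^i$ with every $a_i>0$ for $0\le i\le m$, so there are no internal zeros.

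\emph{Log-concave implies preserves unimodality.} First reduce to elementary unimodal polynomials. Every unimodal $q$ with nonnegative coefficients and mode at index $k$ can be written as a nonnegative combination of "block" polynomials
\[
B_{s,t}(x)=x^s+x^{s+1}+\cdots+x^t, \qquad s\le k\le t .
\]
To see this, peel off layers: take the smallest nonzero coefficient level $c$, subtract $c\,B_{s,t}$, where $[s,t]$ is the support of $q$, and repeat. All blocks obtained contain the common index $k$.

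The key fact is that $p\,B_{s,t}$ has coefficients $b_j=\sum_{i=j-t}^{j-s}a_i$. These are window sums of a log-concave, hence unimodal, sequence. A window sum is unimodal: as the window slides right, $b_{j+1}-b_j=a_{j+1-s}-a_{j-t}$, and log-concavity makes $a_{i+L}/a_i$ weakly decreasing in $i$. So the sign of this difference changes at most once, from $+$ to $-$.

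A sum of unimodal sequences need not be unimodal, so the main obstacle is the combination step. To handle it, I would show that the products $p\,B_{s,t}$ with $s\le k\le t$ can be given a common mode window. The cleaner route is to work with the differences directly. Write $\Delta_j=c_{j+1}-c_j$ for $c=pq$. Then
\[
\Delta_j=\sum_i a_i\,(q_{j+1-i}-q_{j-i}),
\]
which is the convolution of $a$ with the sequence $d=\Delta q$, and $d$ has at most one sign change, from $+$ to $-$. Log-concavity of a positive sequence $a$ says the Toeplitz kernel $K(j,l)=a_{j-l}$ is $\mathrm{TP}_2$. By the variation-diminishing property of $\mathrm{TP}_2$ kernels (Karlin), the convolution $a*d$ has at most one sign change, and in the same direction. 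Hence $pq$ is unimodal. I would prove the needed one-sign-change case of variation diminishing directly. Suppose $d_l\ge 0$ for $l\le r$ and $d_l\le0$ for $l>r$. Then
\[
\Delta_j=\sum_l a_{j-l}d_l ,
\]
and dividing by $a_{j-r}$ shows that $\Delta_j/a_{j-r}$ is weakly decreasing in $j$, because each ratio $a_{j-l}/a_{j-r}$ is increasing or decreasing in $j$ according to the side of $r$ on which $l$ lies. Boundary terms with $a_{j-r}=0$ need a small separate check, using the absence of internal zeros.

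\emph{Preserves unimodality implies log-concave.} Argue by contrapositive. Suppose $a_{k}^2<a_{k-1}a_{k+1}$ for some $k$. Choose the test polynomial $q(x)=1+x+\cdots+x^{M}$ scaled, or better $q$ concentrated on the two monomials $\{0,1\}$ with a weight tuned so that $pq$ dips at index $k$. Concretely, take $q(x)=\lambda+x$ with
\[
\lambda\in\left(\frac{a_{k-1}}{a_k},\,\frac{a_k}{a_{k+1}}\right)^{c},
\]
that is, pick $\lambda$ so that
\[
c_k=\lambda a_k+a_{k-1}<\min(c_{k-1},c_{k+1}),
\]
where $c_{k-1}=\lambda a_{k-1}+a_{k-2}$ and $c_{k+1}=\lambda a_{k+1}+a_k$. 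Since $q=\lambda+x$ is trivially unimodal, a dip in $pq$ gives the contradiction.

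The inequality $a_k^2<a_{k-1}a_{k+1}$ is exactly what allows such a $\lambda$, but the $a_{k-2}$ term may interfere. If so, I would instead use a longer flat block $q=\lambda$-weighted $B_{0,L}$, or a sharply peaked $q$ such as $(1+\epsilon x)^N$ truncated, to isolate the local ratio condition. I expect this converse construction to require the most care.
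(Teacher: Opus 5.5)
The paper gives no proof of this proposition; it is quoted as a classical result of Ibragimov and of Keilson and Gerber, so your argument has to stand on its own. Your forward direction is sound. You write $\Delta=a*d$ with $d$ the difference sequence of $q$, and show that $\Delta_j/a_{j-r}$ is weakly decreasing for $r\le j\le r+\deg p$. That is a correct direct proof of the one-sign-change case of variation diminishing. The boundary check is immediate: for $j<r$ only terms with $d_l\ge 0$ survive, and for $j>r+\deg p$ only terms with $d_l\le 0$. The block decomposition you start with is not needed.

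The converse has a genuine gap. Every test polynomial you propose is itself log-concave with positive coefficients: $\lambda+x$, a scaled flat block, a truncated $(1+\epsilon x)^N$. By the direction you just proved, with the roles of $p$ and $q$ exchanged, its product with any \emph{unimodal} $p$ is therefore unimodal. So these tests can never detect a $p$ that is unimodal but not log-concave, and that is the only nontrivial case (if $p$ is not unimodal, $q=1$ already works). Concretely, for $p=1+x+3x^2$ and $k=1$, the coefficients of $(\lambda+x)p$ are $\lambda,\lambda+1,3\lambda+1,3$, which are unimodal for every $\lambda\ge 0$. Your claim that $a_k^2<a_{k-1}a_{k+1}$ ``is exactly what allows such a $\lambda$'' is therefore false. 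The displayed set is also the complement of an empty interval, since $a_k/a_{k+1}<a_{k-1}/a_k$.

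The witness $q$ must itself fail log-concavity, and your difference formulation produces one. Choose $\theta$ with $a_k/a_{k-1}<\theta<a_{k+1}/a_k$, and set $d_0=1$, $d_1=-\theta$. Place the balancing mass $|\theta-1|$ where it cannot reach $\Delta_k$ or $\Delta_{k+1}$:
\begin{itemize}
\item if $\theta>1$, a positive entry at index $-L$ with $L\ge\deg p$;
\item if $\theta<1$, a negative entry at some index $R\ge k+2$.
\end{itemize}
Then $d$ still has sign pattern $+\cdots+-\cdots-$ and total sum $0$, so $q$ is nonnegative and unimodal. Moreover $\Delta_k=a_k-\theta a_{k-1}<0<a_{k+1}-\theta a_k=\Delta_{k+1}$, which is a strict interior dip in $pq$. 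Up to a shift, $q$ is a plateau with one raised end: $(\theta-1)(1+x+\cdots+x^{L-1})+\theta x^{L}$ or $1+(1-\theta)(x+\cdots+x^{R-1})$. For example, $p=1+x+3x^2$ with $\theta=2$ and $L=3$ gives $q=1+x+x^2+2x^3$, and $pq$ has coefficients $1,2,5,6,5,6$.
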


\subsection*{Layered permutations and compositions}

One might hope that the conjecture is at least resolved for layered permutations, a particularly tractable permutation class. We briefly recall their definition.

The \emph{direct sum}, or simply \emph{sum}, of permutations~$\pi$ of length~$m$ and~$\sigma$ of length~$n$ is the permutation $\pi \oplus \sigma$ of length~${m+n}$ defined by
\[
    (\pi \oplus \sigma)(i) \;=\;
    \begin{cases}
        \pi(i) & \text{for } 1 \le i \le m, \\
        \sigma(i - m) + m & \text{for } m + 1 \le i \le m + n.
    \end{cases}
\]
Visually, the plot of $\pi \oplus \sigma$ places the plot of~$\sigma$ above and to the right of the plot of~$\pi$. The operation is associative, so expressions like $\pi \oplus \sigma \oplus \tau$ do not require parentheses.

A permutation is \emph{layered} if it can be expressed as a sum of decreasing permutations; equivalently, the layered permutations are precisely $\Av(231, 312)$. Every layered permutation has a unique decomposition $\pi = \delta_1 \oplus \delta_2 \oplus \cdots \oplus \delta_k$ where each $\delta_i$ is a decreasing permutation, and so layered permutations are in bijection with compositions: the composition $c=c(1)c(2)\cdots c(k)$ corresponds to the layered permutation with consecutive decreasing layers of lengths $c(1)$, $c(2)$, $\dots$, $c(k)$. For example, the composition $(3, 1, 4)$ corresponds to the layered permutation $321 \oplus 1 \oplus 4321 = 321\,4\,8765$.

Under this bijection, the pattern containment order on layered permutations corresponds to the (generalized) \emph{subword order} on compositions. Concretely, a composition $u = u(1) \cdots u(k)$ with $k$ parts is a subword of a composition $w$ with $\ell$ parts if there exist indices $1 \le i_1 < i_2 < \cdots < i_k \le \ell$ such that $u(j) \le w(i_j)$ for all $1 \le j \le k$. Thus $\pi \le \sigma$ in the permutation pattern order on layered permutations if and only if the corresponding compositions satisfy $u \le w$ in the subword order.

Sagan~\cite{sagan:compositions-in:} established unimodality for principal downsets of compositions, which might seem to settle the layered permutation case. However, there is a catch: \emph{the order Sagan considers is not the subword order}.

In Sagan's \emph{componentwise order}, a composition $u = u(1) \cdots u(k)$ is contained in $w = w(1) \cdots w(\ell)$ if $k \le \ell$ and $u(i) \le w(i)$ for all $1 \le i \le k$. This is a natural analogue of Young's lattice, ordering compositions by componentwise comparison of their parts. But it is not the subword order, which corresponds to the permutation pattern order on layered permutations and has been more commonly studied (see, for example, Bergeron, Bousquet-M\'{e}lou, and Dulucq~\cite{bergeron:standard-paths-:} and Sagan and Vatter~\cite{sagan:the-mobius-func:}).

The difference is that in the subword order, embeddings need not align initial entries. For instance, the composition $(2)$ is not contained in $(1,2)$ under the componentwise order, yet the corresponding layered permutations satisfy $21 \le 1\,32$ in the permutation pattern order.

Sagan's main result is the following. The proof is short and instructive.

\begin{theorem}[{Sagan~\cite[Theorem~3.3]{sagan:compositions-in:}}]
\label{thm:sagan}
The principal downset of any composition is rank-unimodal under the componentwise order.
\end{theorem}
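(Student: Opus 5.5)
The plan is to compute the rank-generating function of the principal downset explicitly as a sum of products of log-concave polynomials, and then apply Proposition~\ref{prop:log-concave-uni-convolution} inductively on the number of parts.

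Fix $w = w(1)\cdots w(\ell)$ and measure the rank of a composition $u$ by its size $|u| = u(1)+\cdots+u(k)$. The downset of $w$ under the componentwise order consists of the empty composition together with every $u = u(1)\cdots u(k)$ with $1\le k\le \ell$ and $1 \le u(i) \le w(i)$. Grouping by the number of parts $k$ gives the rank-generating function
\[
F_w(x) \;=\; 1 + \sum_{k=1}^{\ell} \prod_{i=1}^{k} \bigl(x + x^2 + \cdots + x^{w(i)}\bigr).
\]
A product over the first $k$ parts cannot be unimodal term by term in a useful way, so I would reorganize the sum Horner-style. Write $[m] = x + \cdots + x^m$. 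Then
\[
F_w = 1 + [w(1)]\Bigl(1 + [w(2)]\bigl(1 + \cdots (1 + [w(\ell)])\cdots\bigr)\Bigr).
\]
Define $G_\ell = 1 + [w(\ell)]$ and $G_j = 1 + [w(j)]\,G_{j+1}$, so that $F_w = G_1$. Each $G_j$ is the rank-generating function of the downset of the suffix composition $w(j)\cdots w(\ell)$.

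We prove by downward induction on $j$ that $G_j$ is unimodal, and in fact that its coefficients satisfy
\[
1 = g_0 \le g_1 \le \cdots \le g_m \ge \cdots \ge g_{\deg},
\]
with $g_1 = 1$ and all coefficients from degree $0$ up to $\deg G_j$ positive. The base case $G_\ell = 1 + x + \cdots + x^{w(\ell)}$ is all ones, which is clearly unimodal. For the inductive step, $[w(j)] = x(1+\cdots+x^{w(j)-1})$ has constant coefficients, so it is log-concave with positive coefficients and no internal zeros. By Proposition~\ref{prop:log-concave-uni-convolution}, $[w(j)]\,G_{j+1}$ is unimodal. It has zero constant term, its coefficient of $x$ equals $1$ (coming from $x\cdot 1$), and its later coefficients are nondecreasing up to the mode.

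Adding $1$ only sets the constant term to $1$. The new sequence is therefore $1, 1, h_2, \ldots$ where $1 \le h_2$ or the sequence is already decreasing. Either way it remains unimodal, because $h_1 = 1$ is the smallest positive coefficient of the unimodal sequence $(h_1,h_2,\dots)$ on the increasing side, or the whole sequence is weakly decreasing from $h_1=1$.

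The main point to check carefully is this last step: adding the constant $1$ preserves unimodality. The argument needs $h_1 = 1 \ge$ the prepended value and $h_1 \le h_2$ whenever the mode is beyond index $1$. Both follow because $h_1$ is the coefficient of $x$ in $x\,G_{j+1}\cdot(\cdots)$, which equals $g_0 = 1$, and every coefficient of a product with positive integer coefficients is at least $1$. Hence $1,1,h_2,\ldots$ increases weakly and then decreases. Applying this with $j=1$ gives the unimodality of $F_w = G_1$, which proves Theorem~\ref{thm:sagan}.
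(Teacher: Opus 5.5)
Your proof is correct and is essentially the paper's argument. Your $G_j = 1 + [w(j)]\,G_{j+1}$ is the paper's recurrence $f_w = 1 + (x+\cdots+x^{w(1)})\,f_{w(2)\cdots w(\ell)}$ applied to suffixes, with Proposition~\ref{prop:log-concave-uni-convolution} handling the product, and the coefficient of $x$ being $1$ guaranteeing that adding the constant term preserves unimodality. Your factorization $[w(j)] = x(1+\cdots+x^{w(j)-1})$ is a small improvement, since it makes the ``positive coefficients'' hypothesis of the proposition hold literally.
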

\begin{proof}
Let $f_w(x)$ denote the rank-generating polynomial for the principal downset of the composition $w = w(1) \cdots w(\ell)$ in the componentwise order. If $u = u(1) \cdots u(k)$ is contained in~$w$, then either~$u$ is empty, or $u(1) \le w(1)$ and $u(2) \cdots u(k) \le w(2) \cdots w(\ell)$. This yields the recurrence
\[
    f_w(x) \;=\; 1 \,+\, \bigl(x + x^2 + \cdots + x^{w(1)}\bigr)\, f_{w(2) \cdots w(\ell)}(x),
\]
with $f_\varepsilon(x) = 1$ for the empty composition.

We proceed by induction on the number of parts of~$w$. The base case~${f_\varepsilon(x) = 1}$ is trivially unimodal. For the inductive step, the polynomial $x + x^2 + \cdots + x^{w(1)}$ is log-concave, so by Proposition~\ref{prop:log-concave-uni-convolution}, its product with the (inductively) unimodal polynomial $f_{w(2) \cdots w(\ell)}(x)$ is unimodal. Since the coefficient of $x$ in this product equals $1$, adding the constant term $1$ preserves unimodality.
\end{proof}

Although the componentwise and subword orders are different, for compositions with weakly decreasing part sizes, the principal downsets under the two orders have the same rank sequences. To see this, observe that if $w = w(1) \cdots w(\ell)$ is weakly decreasing, then any embedding of $u$ into $w$ in the subword order can be shifted left to produce an embedding in the componentwise order. Thus the set of compositions contained in~$w$ is the same under both orders, even though the order relations among them may differ, and so the principal downsets of compositions with weakly decreasing parts are rank-unimodal by Sagan's Theorem~\ref{thm:sagan}. By symmetry (reversing the compositions), the unimodality also holds when~$w$ is weakly increasing.

\begin{corollary}
\label{cor:sagan-monotone}
The principal downset of a composition with monotone parts is rank-unimodal in the subword order. Hence, the principal downset of the corresponding layered permutation is rank-unimodal.
\end{corollary}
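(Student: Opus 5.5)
The plan is to show that for a composition $w = w(1)\cdots w(\ell)$ with monotone parts, the set of compositions lying below $w$ in the subword order coincides with the set lying below $w$ in the componentwise order. Both orders rank compositions by their sum, which is the length of the corresponding layered permutation, so the two principal downsets then have the same rank sequence and Theorem~\ref{thm:sagan} applies. We first handle the weakly decreasing case and then obtain the weakly increasing case by reversal.

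\textbf{Step 1 (componentwise implies subword).} This holds for any $w$. If $u(i) \le w(i)$ for $1 \le i \le k \le \ell$, then the indices $i_j = j$ witness $u \le w$ in the subword order.

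\textbf{Step 2 (subword implies componentwise, for weakly decreasing $w$).} Suppose $u \le w$ in the subword order via indices $i_1 < \cdots < i_k$. Then $i_j \ge j$ for every $j$, and since $w$ is weakly decreasing we get $w(j) \ge w(i_j) \ge u(j)$. Also $k \le \ell$. Hence $u \le w$ componentwise. This shift-left argument is the only substantive point. It is where monotonicity is used, so it is where I would take the most care, but I do not expect a real obstacle. Combining Steps 1 and 2, the two downsets are equal as sets. Both are graded by the sum of parts, so their rank-generating polynomials agree and are unimodal by Theorem~\ref{thm:sagan}.

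\textbf{Step 3 (weakly increasing $w$).} Reversal $u(1)\cdots u(k) \mapsto u(k)\cdots u(1)$ is an automorphism of the subword order that preserves the sum of parts. So the downset of a weakly increasing $w$ has the same rank sequence as the downset of its weakly decreasing reverse, and the previous case applies.

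\textbf{Step 4 (layered permutations).} Under the bijection between compositions and layered permutations, the subword order corresponds to the pattern order, as noted above. The class of layered permutations, $\Av(231,312)$, is a downset, so every pattern of a layered permutation is itself layered. Hence the principal downset of the layered permutation corresponding to $w$ in the full permutation pattern poset is exactly the image of the downset of $w$ in the subword order. This image is rank-preserving, so it is rank-unimodal.
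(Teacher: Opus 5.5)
Your proof is correct and follows the paper's argument. For weakly decreasing $w$, you shift any subword embedding left (using $i_j \ge j$ and monotonicity), so the two principal downsets coincide as sets graded by sum of parts; you then invoke Sagan's Theorem~\ref{thm:sagan} and handle the weakly increasing case by reversal. Your Step~4 also makes explicit what the paper leaves implicit: every pattern of a layered permutation is layered (since $\Av(231,312)$ is a class), so the downset in the full pattern poset is exactly the image of the composition downset.
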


In particular, this covers the case of ``rectangular'' compositions $(\ell, \ell, \dots, \ell)$, which was the original question the author asked Sagan about at \emph{Permutation Patterns 2007}. The general case, even for layered permutations, remains open.

\begin{conjecture}
\label{conj-layered-unimodal}
The principal downset of every composition is rank-unimodal in the subword order. Hence, the principal downset of every layered permutation is rank-unimodal.
\end{conjecture}

Albert and the author have verified this conjecture for all compositions of length $34$ or less. This is of course a special case of Conjecture~\ref{conj-unimodal-downsets}, which is itself a special case of Conjecture~\ref{conj-unimodal-intervals}.

\subsection{Log-convexity and principal classes}

The principal downsets we have been discussing are permutation classes, but rather unusual ones, because they are finite. What can be said along these lines about more typical permutation classes, whose enumerations grow indefinitely?

For \emph{principal classes} (those defined by avoiding a single permutation) there is an intriguing open problem. It is not about unimodality: the enumeration of $\Av(\beta)$ is trivially unimodal because it is weakly increasing (no matter what $\beta$ is, if $\pi$ avoids $\beta$ then so does either $\pi \oplus 1$ or $\pi$ with a new maximum prepended). Nor is it about log-concavity: sequences beginning~${1, 2, 5}$ or~${1, 2, 6}$ are not log-concave, so $\Av(\beta)$ cannot be log-concave for any permutation $\beta$ of length three or longer. The interesting question is whether principal permutation classes are log-convex.

A sequence $a_0, a_1, \dots$ of nonnegative terms is \emph{log-convex} if $a_n^2 \le a_{n-1} a_{n+1}$ for all $n \ge 1$. For example, the factorials are log-convex. The Fibonacci numbers (which enumerate several permutation classes, though no principal ones) provide a nice non-example: Cassini's identity states that $F_{n-1} F_{n+1} - F_n^2$ alternates between $1$ and $-1$, so the log-convexity inequality holds only half the time.

Could principal classes be log-convex? Liang and Sagan~\cite{liang:log-concavity-a:} develop a method using the FKG inequality to establish log-convexity combinatorially; among other results, they prove the log-convexity of the Catalan numbers using the middle order of Bouvel, Ferrari, and Tenner~\cite{bouvel:between-weak-an:}. There is in fact a property stronger than log-convexity that has been conjectured to hold for all principal classes.

That stronger property is the following. A sequence $a_0, a_1, \dots$ is a \emph{Stieltjes moment sequence} if there exists a nonnegative measure $\mu$ supported on $[0, \infty)$ such that $a_n = \int_0^\infty x^n \, d\mu(x)$ for all $n$. Equivalent characterizations are that the Hankel matrix
\[
\begin{bmatrix}
a_0 & a_1 & a_2 & \cdots \\
a_1 & a_2 & a_3 & \cdots \\
a_2 & a_3 & a_4 & \cdots \\
\vdots & \vdots & \vdots & \ddots
\end{bmatrix}
\]
is totally nonnegative (all minors are nonnegative), or that the generating function $\sum a_n x^n$ can be expressed as a continued fraction of the form
\[
\cfrac{\alpha_0}{1 - \cfrac{\alpha_1 x}{1 - \cfrac{\alpha_2 x}{1 - \cdots}}}
\]
for some sequence of nonnegative reals $\alpha_0, \alpha_1, \alpha_2, \dots$.

For example, the Catalan numbers satisfy
\[
C_{n+1} = \frac{1}{2\pi} \int_0^4 x^n \sqrt{x(4-x)} \, dx
\quad
\text{and}
\quad
\sum_{n \ge 0} C_n x^n = \cfrac{1}{1 - \cfrac{x}{1 - \cfrac{x}{1 - \cdots}}}.
\]

In particular, the Hankel characterization shows immediately that Stieltjes moment sequences are log-convex: among the $2 \times 2$ minors of that matrix are the quantities $a_{n-1} a_{n+1} - a_n^2$.

That the sequence $|\Av_n(12 \cdots k)|$ is a Stieltjes moment sequence follows from work of Rains~\cite{rains:increasing-subs:}, whose interest in this result came from random matrix theory. The first to consider other principal permutation classes seems to have been Elvey Price in his PhD thesis~\cite{elvey-price:selected-proble:}. Using the algebraic generating function first established by B\'{o}na~\cite{bona:exact-enumerati:}, he showed that the enumeration of $\Av(1342)$ is a Stieltjes moment sequence, and posed the following.

\begin{question}[Elvey Price~{\cite[p.~227]{elvey-price:selected-proble:}}]
\label{question:SMS}
Is the enumeration of $\Av(\beta)$ a Stieltjes moment sequence for every permutation $\beta$ of length at least two?
\end{question}

Bostan, Elvey Price, Guttmann, and Maillard~\cite{bostan:stieltjes-momen:} develop this theme further, also stating Question~\ref{question:SMS}. They study the enumerations of $\Av(1342)$ and $\Av(12 \cdots k)$ for $3 \le k \le 8$ in considerable detail, and show that the first fifty terms of the enumeration of $\Av(1324)$ found by Conway, Guttmann, and Zinn-Justin~\cite{conway:1324-avoiding-p:} are consistent with the sequence being a Stieltjes moment sequence. Clisby, Conway, Guttmann, and Inoue~\cite{clisby:classical-lengt:} give numerical evidence that the enumerations of all principal classes defined by avoiding a pattern of length five are Stieltjes moment sequences.

Blitvi\'{c} and Steingr\'{\i}msson~\cite{blitvic:permutations-mo:} have developed a single fourteen-parameter continued fraction whose specializations enumerate a wide variety of sets of permutations, all of which are therefore Stieltjes moment sequences. This suggests that the moment property for principal classes, if it holds, may reflect deeper structural connections rather than ad hoc coincidences.

\section{Equivalence}
\label{sec:wilf}

\begin{figure}
\begin{center}
	\begin{footnotesize}
	\begin{tikzpicture}[scale=2.5]
		\path [draw=none] (-3,0)--(3,0);
		\draw [darkgray, dashed] (0,0)--(1.2,0);
		\draw [darkgray, dashed] (0,0)--(-1.2,0);
		\draw [->] (0,1.4)-- +(0.2,0);
		\draw [->] (0,1.4)-- +(-0.2,0);
		\node at (0,1.4) [above] {$\pi^{\textrm{r}} = \pi\circ\rho$};
		\draw [darkgray, dashed] (0,0)--({1+sqrt(2)/10}, {1+sqrt(2)/10});
		\draw [darkgray, dashed] (0,0)--({-1-sqrt(2)/10}, {-1-sqrt(2)/10});
		\draw [->] ({1+2*sqrt(2)/10}, {1+2*sqrt(2)/10})-- +({sqrt(2)/10}, {-sqrt(2)/10});
		\draw [->] ({1+2*sqrt(2)/10}, {1+2*sqrt(2)/10})-- +({-sqrt(2)/10}, {sqrt(2)/10});
		\node at ({1+1.8*sqrt(2)/10}, {1+1.8*sqrt(2)/10}) [above right] {$\pi^{-1}$};
		\draw [darkgray, dashed] (0,0)--(0,1.2);
		\draw [darkgray, dashed] (0,0)--(0,-1.2);
		\draw [->] (1.4,0)-- +(0,0.2);
		\draw [->] (1.4,0)-- +(0,-0.2);
		\node at (1.4,0) [right] {$\pi^{\textrm{c}} = \rho\circ\pi$};
		\draw [darkgray, dashed] (0,0)--({1+sqrt(2)/10}, {-1-sqrt(2)/10});
		\draw [darkgray, dashed] (0,0)--({-1-sqrt(2)/10}, {1+sqrt(2)/10});
		\draw [->] ({-1-2*sqrt(2)/10}, {1+2*sqrt(2)/10})-- +({sqrt(2)/10}, {sqrt(2)/10});
		\draw [->] ({-1-2*sqrt(2)/10}, {1+2*sqrt(2)/10})-- +({-sqrt(2)/10}, {-sqrt(2)/10});
		\node at ({-1-1.8*sqrt(2)/10}, {1+1.8*sqrt(2)/10}) [above left] {$\left(\pi^{\textrm{rc}}\right)^{-1} = \rho\circ\pi^{-1}\circ\rho$};
		\draw [->] (0.3,0) arc (0:270: 0.3);
		\node at ({0.3*cos(247.5)-0.065}, {0.3*sin(247.5)-0.025}) [below] {$(\pi^{\textrm{r}})^{-1}$};
		\draw [->] (0.5,0) arc (0:180: 0.5);
		\node at ({0.5*cos(157.5)}, {0.5*sin(157.5)}) [left] {$\pi^{\textrm{rc}}$};
		\draw [->] (0.7,0) arc (0:90: 0.7);
		\node at ({0.7*cos(67.5)}, {0.7*sin(67.5)+0.025}) [above] {$(\pi^{\textrm{c}})^{-1}$};
		\draw [darkgray, ultra thick, rounded corners=0.01, line cap=round] (-1,-1) rectangle (1,1);
	\end{tikzpicture}
	\end{footnotesize}
\end{center}
\caption{The symmetries of the square, labelled by their effect on a permutation~$\pi$.}
\label{fig-symmetries-square-perms}
\end{figure}
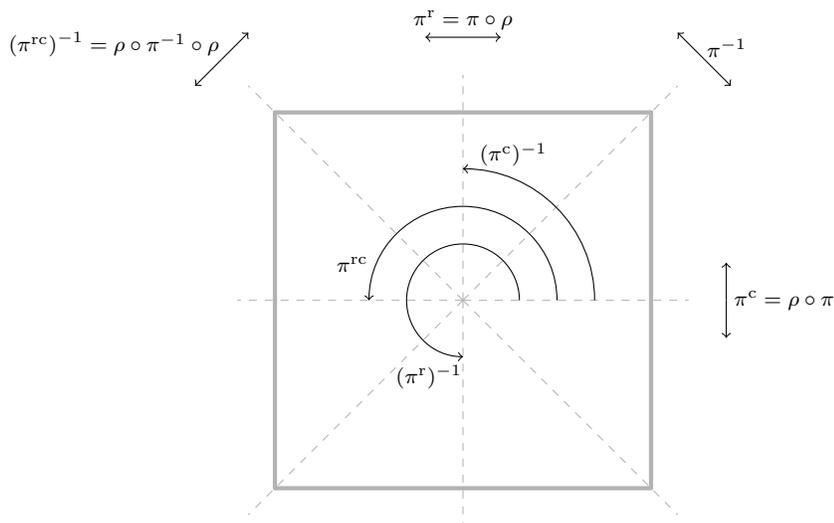

The permutation pattern order has a natural symmetry group, the dihedral group $D_4$ of order eight, visualized in Figure~\ref{fig-symmetries-square-perms} as symmetries of the square. These symmetries act on the plots of permutations, and in the permutation patterns literature the group is commonly described as generated by three reflections: reverse~${\pi \mapsto \pi^{\mathrm{r}}}$ (reflection about the vertical axis), complement~${\pi \mapsto \pi^{\mathrm{c}}}$ (reflection about the horizontal axis), and inverse~${\pi \mapsto \pi^{-1}}$ (reflection about the main diagonal). The standard presentation of dihedral groups instead uses one reflection and one rotation; here, this would be any one of the four reflections together with the $90^\circ$ rotation~${\pi \mapsto (\pi^{\mathrm{c}})^{-1}}$ or the $270^\circ$ rotation~${\pi \mapsto (\pi^{\mathrm{r}})^{-1}}$. As indicated by Figure~\ref{fig-symmetries-square-perms}, composing two reflections yields a rotation, while composing three reflections yields another reflection.

These symmetries partition the permutations of each length into \emph{symmetry classes}: permutations~$\pi$ and $\sigma$ lie in the same symmetry class if $\sigma = \Phi(\pi)$ for some symmetry $\Phi$. Since symmetries are automorphisms of the permutation pattern order, any enumerative question about~$\pi$-avoiding permutations is equivalent to the corresponding question about $\Phi(\pi)$-avoiding permutations. (This applies only when counting all permutations, however; when we impose additional constraints, such as counting derangements in the next section, we lose symmetries.)

It can be useful to adopt a group-theoretic perspective. For permutations of length~$n$, let
\[
	\rho = {n(n-1)\cdots 21} = \id^{\mathrm{r}} = \id^{\mathrm{c}}.
\]
Then complement is left multiplication by~$\rho$, $\pi^{\mathrm{c}} = \rho \circ \pi$, while reverse is right multiplication by~$\rho$, $\pi^{\mathrm{r}} = \pi \circ \rho$. Since~$\rho$ is an involution, identities among the symmetries become straightforward calculations. For example, $(\pi^{\mathrm{r}})^{-1} = (\pi \circ \rho)^{-1} = \rho^{-1} \circ \pi^{-1} = \rho \circ \pi^{-1} = (\pi^{-1})^{\mathrm{c}}$.

Two permutations can have equinumerous avoidance classes without being symmetries of each other. The classical example is that both $\Av(231)$ and $\Av(321)$ are counted by the Catalan numbers, yet~$231$ and~$321$ lie in different symmetry classes. This motivated Wilf to ask, in the 1980s, for a classification of when avoiding one permutation is equally restrictive as avoiding another.%
\footnote{Wilf never raised this question in print, but Babson and West~\cite{babson:the-permutation:} and Stanley~\cite[p.~357]{calkin:herbert-s.-wilf:} both state that he posed it in the 1980s.}
We say that~$\pi$ and~$\sigma$ are \emph{Wilf-equivalent}, written $\pi \sim \sigma$, if $|\Av_n(\pi)| = |\Av_n(\sigma)|$ for all~$n$. Symmetry implies Wilf-equivalence, but as the example $231 \sim 321$ shows, the converse does not hold.

Before discussing Wilf-equivalence further, we pause to note that while counting symmetry classes turns out to be straightforward, this was not always appreciated. In his November 1962 \emph{Scientific American} column, Gardner~\cite{gardner:mathematical-ga:207:5} described the problem of counting ``essentially different'' placements of~$n$ non-attacking rooks on an $n \times n$ chessboard (equivalent to counting symmetry classes of permutations of length~$n$), writing that ``the task of eliminating rotation and reflection duplicates is so difficult that it is not known how many essentially different solutions exist even on as low-order a board as the~${8 \times 8}$.'' In fact, this is a routine application of P\'olya theory, and moreover, Lucas had already solved the problem in his 1891 book \emph{Th\'{e}orie des Nombres}~\cite[pp.~220--222]{lucas:theorie-des-nom:}, well before P\'olya.%
\footnote{For a similar problem that is actually difficult, consider the \emph{$n$-queens problem}, which asks to count symmetry classes of non-attacking placements of~$n$ queens on an $n \times n$ board. Unlike the rooks problem, there is no simple formula: the sequence of solutions (\OEISlink{A000170}) begins $1, 0, 0, 2, 10, 4, 40, 92, 352, \dots$, and the current record, $n = 27$, required a year-long massively parallel computation~\cite{preuser:putting-queens-:}. We refer to the survey of Bell and Stevens~\cite{bell:a-survey-of-kno:}.}

\begin{table}[t]
\begin{center}
    \begin{tabular}{l l | rrrrrrr}
        & & 1 & 2 & 3 & 4 & 5 & 6 & 7 \\ \hline
        symmetry classes & \OEISlink{A000903}
            & 1 & 1 & 2 & 7 & 23 & 115 & 694 \\
        Wilf-equiv.\ classes & \OEISlink{A099952}
            & 1 & 1 & 1 & 3 & 16 & 91 & 595
    \end{tabular}
\end{center}
\caption{The number of symmetry classes and Wilf-equivalence classes for $1\le n\le 7$.}
\label{table:wilf-equiv}
\end{table}

Wilf-equivalence is harder than this. If $\pi \sim \sigma$, then~$\pi$ and~$\sigma$ must have the same length. The permutations of lengths $1$, $2$, and $3$ each form a single Wilf-equivalence class. For length $4$, there are $7$ symmetry classes but only $3$ Wilf-equivalence classes; establishing that there are exactly $3$ required considerable work, completed by Stankova~\cite{stankova:classification-:} in 1996. In their 2002 paper, Stankova and West~\cite{stankova:a-new-class-of-:} extended the classification to length $7$. These values are displayed in Table~\ref{table:wilf-equiv}. The enumeration of Wilf-equivalence classes has remained stuck here since 2002.

\begin{question}
How many Wilf-equivalence classes of permutations of length~$8$ are there?
\end{question}

A first step toward extending this to length~$8$ would be to compute $|\Av_n(\beta)|$ for each~$\beta$ of length~$8$ and sufficiently large~$n$ to separate the symmetry classes into candidate Wilf-equivalence classes, then to check which apparent equivalences are already explained by known results. Perhaps existing theorems suffice, or perhaps there are new Wilf-equivalences waiting to be discovered.

\subsection*{Sufficient conditions: shape-Wilf-equivalence}

Most known Wilf-equivalences can be explained by a stronger notion of equivalence. A \emph{full rook placement} (or \emph{frp}) of shape $\lambda$ is a Ferrers board of shape $\lambda$ with one rook in each row and column. Every permutation~$\pi$ of length $n$ corresponds to the $n \times n$ square frp with rooks in positions $(i, \pi(i))$; this is simply the plot of~$\pi$ with grid lines added. There is a natural containment order on frps: $R$ is contained in $S$ if $R$ can be obtained from $S$ by deleting rows and columns. Restricted to square frps, this coincides with the pattern order on permutations.

We say that an frp \emph{contains} a permutation $\sigma$ if it contains the square frp corresponding to $\sigma$, and otherwise that it \emph{avoids} $\sigma$. Note that the entire square must fit within the frp; for instance, an frp whose rooks happen to form a $21$-pattern might still avoid $21$ if the top-right corner doesn't fit inside the Ferrers board. A general frp can be visualized as a plot enclosed by a staircase boundary (a Dyck path). Permutations $\beta$ and $\gamma$ are \emph{shape-Wilf-equivalent} if, for every Ferrers shape $\lambda$, the number of $\beta$-avoiding frps of shape $\lambda$ equals the number of $\gamma$-avoiding frps of shape $\lambda$. Shape-Wilf-equivalence implies Wilf-equivalence (by restricting to square shapes), but is strictly stronger.

The power of shape-Wilf-equivalence comes from a closure property first observed by Babson and West~\cite{babson:the-permutation:} (implicit in the proofs of their Theorems~1.6 and~1.9) and made explicit by Backelin, West, and Xin~\cite[Proposition~2.3]{backelin:wilf-equivalenc:}.

\begin{proposition}
\label{prop:swe-sums}
If $\alpha$ and~$\beta$ are shape-Wilf-equivalent, then~${\alpha \oplus \gamma}$ and~${\beta \oplus \gamma}$ are shape-Wilf-equivalent for every permutation~$\gamma$.
\end{proposition}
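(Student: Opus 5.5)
The plan is the standard Babson--West / Backelin--West--Xin argument: fix a Ferrers shape $\lambda$ and build a bijection between $(\alpha\oplus\gamma)$-avoiding and $(\beta\oplus\gamma)$-avoiding frps of shape $\lambda$. Throughout, I use the French/plot convention, so a Ferrers board is a down-left justified set of cells and a square pattern must fit below and to the left of the staircase boundary. Given an frp $R$ of shape $\lambda$, \emph{mark} every cell (equivalently every column and row) as follows. A cell $(i,j)$ of the board is \emph{good} if the region strictly above and to the right of it, intersected with the board, contains a copy of $\gamma$: that is, some rooks of $R$ lying northeast of $(i,j)$ form a $\gamma$-pattern whose bounding square fits in the board. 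The set of good cells is itself down-left closed, since moving southwest only enlarges the northeast region. So the good cells form a Ferrers shape $\mu \subseteq \lambda$.

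The key structural step is to show that the rooks of $R$ lying in $\mu$ form an frp $R|_\mu$ of shape $\mu$ after deleting empty rows and columns. More precisely, let $S$ be the set of rows and columns of $\mu$ that contain rooks lying inside $\mu$. I will check that deleting the other rows and columns gives a genuine frp, with one rook per remaining row and column and a Ferrers boundary. The subtlety is that a row of $\mu$ may have its rook outside $\mu$. I handle this by working with the rows and columns whose rooks lie in $\mu$ and verifying that the induced board is still a Ferrers shape, since it is an intersection of $\mu$ with a product set. It then remains to verify that the counts match, so that each remaining row has exactly one rook.

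The central claim is then that $R$ contains $\alpha\oplus\gamma$ if and only if $R|_\mu$ contains $\alpha$. For ``if'': a copy of $\alpha$ inside $\mu$ has its top-right bounding corner in a good cell. The $\gamma$ witnessing goodness lies strictly northeast, and the combined bounding square fits in $\lambda$ because the $\gamma$ square already fits and the board is Ferrers. For ``only if'': given $\alpha\oplus\gamma$ in $R$, the corner cell formed by the top row and rightmost column of the $\alpha$ part is good. The entire $\alpha$ square then lies in $\mu$ by down-left closure, so $\alpha \le R|_\mu$.

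The bijection is now clear. The shape $\mu$ and all rooks outside $\mu$ depend only on the positions of rooks outside $\mu$, together with the $\gamma$-copies, which lie outside the relevant southwest regions. Applying the given shape-Wilf bijection for $\alpha,\beta$ to $R|_\mu$, and leaving the rest fixed, sends $\alpha$-avoiders of shape $\mu$ to $\beta$-avoiders. I expect the \textbf{main obstacle} to be verifying that this replacement does not change $\mu$. Goodness of a cell depends only on rooks northeast of it, and I must make sure those rooks are never inside $\mu$ in a way that matters. I would argue this by showing that the $\gamma$-witness for a good cell $c$ can be taken to use only rooks outside $\mu$: take the witness whose lowest-leftmost rook is extremal northeast, since a rook inside $\mu$ could be replaced further out. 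If that extremal argument fails, the fallback is to define goodness via the top-right-most $\gamma$ copies, which is the approach of Backelin--West--Xin.
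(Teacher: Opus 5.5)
Your proposal is correct and follows the same shadow argument the paper sketches: the good cells are exactly the shadow cast by copies of $\gamma$, you restrict to it, apply the $\alpha$-to-$\beta$ bijection on the reduced shape (obtained from $\mu$ by deleting the rows and columns whose rook lies outside $\mu$), and restore. Your extremal step for the invariance of the shadow does go through. Take the witness for $c$ that maximizes its minimum column index; a witness rook lying in $\mu$ would supply another witness strictly northeast of it. The reverse inclusion (no new good cells appear) is immediate, since everything northeast of a non-good cell lies outside $\mu$, so the fallback is unnecessary and you have filled in a point the paper glosses over.
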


\begin{figure}[t]
\begin{center}
	\begin{tikzpicture}[scale=0.269,baseline=(current bounding box.south)]
	\begin{scope}[shift={(.5,.5)}, darkgray, line cap=round,rounded corners=.5]
		\draw[thick, fill=lightgray]
			(0,0)--(0,6)--(2,6)--(2,3)--(5,3)--(5,0)--(0,0);
		\foreach \i in {0,1,...,5}
			\draw (\i,0)--(\i,8) (0,\i)--(8,\i);
		\draw (6,0)--(6,8);
		\draw (7,0)--(7,5);
		\draw (0,6)--(7,6);
		\draw (0,7)--(7,7);
		\draw[thick]
			(0,0)--(0,8)--(6,8)--(6,7)--(7,7)--(7,5)--(8,5)--(8,0)--(0,0);
	\end{scope}
	\plotperm{3,6,8,1,2,5,7,4}
	\end{tikzpicture}
	\begin{tikzpicture}[scale=0.269,baseline=(current bounding box.south)]
		\path [draw=none] (0,0)--(0,2.68);
		\node at (0,2.68) {$\xrightarrow{\phantom{\text{BS}}}$};
	\end{tikzpicture}
	\begin{tikzpicture}[scale=0.269,baseline=(current bounding box.south)]
	\begin{scope}[shift={(.5,.5)}, darkgray, line cap=round,rounded corners=.5]
		\draw[thick, fill=lightgray]
			(0,0)--(0,6)--(2,6)--(2,3)--(5,3)--(5,0)--(0,0);
		\foreach \i in {0,1,2}
			\draw (\i,0)--(\i,6) (0,\i)--(5,\i);
		\draw (0,3)--(2,3) (0,4)--(2,4) (0,5)--(2,5);
		\draw (3,0)--(3,3) (4,0)--(4,3);
	\end{scope}
	\plotpartialperm{1/3,2/6,4/1,5/2}
	\end{tikzpicture}	
	\begin{tikzpicture}[scale=0.269,baseline=(current bounding box.south)]
		\path [draw=none] (0,0)--(0,2.68);
		\node at (0,2.68) {$\xrightarrow{\phantom{\text{BS}}}$};
	\end{tikzpicture}
	\begin{tikzpicture}[scale=0.269,baseline=(current bounding box.south)]
	\begin{scope}[shift={(.5,.5)}, darkgray, line cap=round,rounded corners=.5]
		\draw[thick, fill=lightgray]
			(0,0)--(0,4)--(2,4)--(2,3)--(4,3)--(4,0)--(0,0);
		\foreach \i in {0,1,2}
			\draw (\i,0)--(\i,4) (0,\i)--(4,\i);
		\draw (0,3)--(2,3);
		\draw (3,0)--(3,3);
	\end{scope}
	\plotperm{3,4,1,2}
	\end{tikzpicture}	
	\begin{tikzpicture}[scale=0.269,baseline=(current bounding box.south)]
		\path [draw=none] (0,0)--(0,2.68);
		\node at (0,2.68) {$\xrightarrow{\text{BS}}$};
	\end{tikzpicture}
	\begin{tikzpicture}[scale=0.269,baseline=(current bounding box.south)]
	\begin{scope}[shift={(.5,.5)}, darkgray, line cap=round,rounded corners=.5]
		\draw[thick, fill=lightgray]
			(0,0)--(0,4)--(2,4)--(2,3)--(4,3)--(4,0)--(0,0);
		\foreach \i in {0,1,2}
			\draw (\i,0)--(\i,4) (0,\i)--(4,\i);
		\draw (0,3)--(2,3);
		\draw (3,0)--(3,3);
	\end{scope}
	\plotperm{2,4,1,3}
	\end{tikzpicture}	
	\begin{tikzpicture}[scale=0.269,baseline=(current bounding box.south)]
		\path [draw=none] (0,0)--(0,2.68);
		\node at (0,2.68) {$\xrightarrow{\phantom{\text{BS}}}$};
	\end{tikzpicture}
	\begin{tikzpicture}[scale=0.269,baseline=(current bounding box.south)]
	\begin{scope}[shift={(.5,.5)}, darkgray, line cap=round,rounded corners=.5]
		\draw[thick, fill=lightgray]
			(0,0)--(0,6)--(2,6)--(2,3)--(5,3)--(5,0)--(0,0);
		\foreach \i in {0,1,2}
			\draw (\i,0)--(\i,6) (0,\i)--(5,\i);
		\draw (0,3)--(2,3) (0,4)--(2,4) (0,5)--(2,5);
		\draw (3,0)--(3,3) (4,0)--(4,3);
	\end{scope}
	\plotpartialperm{1/2,2/6,4/1,5/3}
	\end{tikzpicture}	
	\begin{tikzpicture}[scale=0.269,baseline=(current bounding box.south)]
		\path [draw=none] (0,0)--(0,2.68);
		\node at (0,2.68) {$\xrightarrow{\phantom{\text{BS}}}$};
	\end{tikzpicture}
	\begin{tikzpicture}[scale=0.269,baseline=(current bounding box.south)]
	\begin{scope}[shift={(.5,.5)}, darkgray, line cap=round,rounded corners=.5]
		\draw[thick, fill=lightgray]
			(0,0)--(0,6)--(2,6)--(2,3)--(5,3)--(5,0)--(0,0);
		\foreach \i in {0,1,...,5}
			\draw (\i,0)--(\i,8) (0,\i)--(8,\i);
		\draw (6,0)--(6,8);
		\draw (7,0)--(7,5);
		\draw (0,6)--(7,6);
		\draw (0,7)--(7,7);
		\draw[thick]
			(0,0)--(0,8)--(6,8)--(6,7)--(7,7)--(7,5)--(8,5)--(8,0)--(0,0);
	\end{scope}
	\plotperm{3,6,8,1,2,5,7,4}
	\end{tikzpicture}
\end{center}
\caption{The proof of Proposition~\ref{prop:swe-sums} with $\alpha = 231$, $\beta = 312$, and $\gamma = 21$. The shaded region is the shadow cast by copies of $21$. The bijection labeled BS is due to Bloom and Saracino~\cite{bloom:a-simple-biject:}.}
\label{fig:swe-example}
\end{figure}
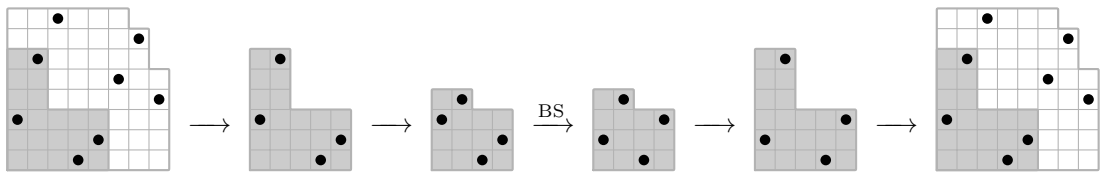

The idea is simple and illustrated in Figure~\ref{fig:swe-example}. Given an $\alpha \oplus \gamma$-avoiding frp, consider the shadow cast by copies of $\gamma$: the cells lying southwest of every copy of $\gamma$ contained in the frp (where, as always, a copy of $\gamma$ means the full square frp corresponding to $\gamma$, not merely rooks forming a $\gamma$-pattern). This shadow forms a Ferrers board (shown shaded), and the rooks within it, after removing empty rows and columns, form an $\alpha$-avoiding frp. We then apply whatever bijection witnesses the shape-Wilf-equivalence of $\alpha$ and~$\beta$ to obtain a~$\beta$-avoiding frp of the same shape. Restoring the empty rows and columns and replacing the portion of the frp outside the shadow yields a $\beta \oplus \gamma$-avoiding frp of the original shape.

From one shape-Wilf-equivalence, Proposition~\ref{prop:swe-sums} generates infinitely many others. In fact, only two basic shape-Wilf-equivalences are known.

\begin{theorem}[Backelin, West, and Xin~{\cite[Theorem~2.1]{backelin:wilf-equivalenc:}}]
\label{thm:inc-dec-swe}
For every $k \ge 1$, the permutations $k(k-1) \cdots 21$ and~$12 \cdots k$ are shape-Wilf-equivalent.
\end{theorem}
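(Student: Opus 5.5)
The plan is to prove the statement by induction on the shape, following the strategy of Backelin, West, and Xin. I would not try to build an explicit bijection; instead, I would show that for every Ferrers shape $\lambda$, the number of frps of shape $\lambda$ avoiding $12\cdots k$ and the number avoiding $k(k-1)\cdots 1$ both satisfy the same recursive description.

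The key reformulation is to stop counting frps one at a time and instead refine the count by a statistic, then interpret it through the Robinson--Schensted--Knuth philosophy. For a filling of $\lambda$, define $\mathrm{ne}(\lambda)$, respectively $\mathrm{se}(\lambda)$, to be the length of the longest increasing (NE) chain, respectively decreasing (SE) chain, whose enclosing square fits inside $\lambda$. Here "fits" means the chain's top-right cell lies in $\lambda$. Avoiding $12\cdots k$ means that every increasing chain of length $k$ fails to fit; avoiding $k\cdots 1$ is the analogous condition for decreasing chains.

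To organize the proof, I would strengthen the statement. Read $\lambda$ by its sequence of corners, or equivalently via growth diagrams in the sense of Fomin. Each lattice point on the boundary of $\lambda$ receives a partition: the RSK shape of the rooks lying southwest of that point. Fomin's local rules show that, as we move along the boundary of $\lambda$, these shapes form an oscillating sequence of partitions. Moreover, the filling is recovered bijectively from this sequence. By Greene's theorem, applied to each rectangle southwest of a boundary point, the longest increasing chain fitting in that rectangle is the first row of the shape, and the longest decreasing chain is the first column. Every chain that fits in $\lambda$ fits in one of these maximal rectangles, so avoiding $12\cdots k$ is equivalent to all boundary partitions having fewer than $k$ columns. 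Likewise, avoiding $k\cdots 1$ is equivalent to all boundary partitions having fewer than $k$ rows.

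Transposing every partition in an oscillating sequence gives a valid oscillating sequence of the same type. Composing the growth-diagram bijection with transposition and then inverting therefore gives a shape-preserving bijection between $12\cdots k$-avoiding and $k\cdots 1$-avoiding frps of shape $\lambda$.

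The main obstacle is verifying that the growth-diagram correspondence works for arbitrary Ferrers shapes rather than just squares. Two things must be checked. First, the full rook placement must be recoverable from the partitions on the boundary alone, by peeling off cells at corners using the backward local rules. Second, every oscillating sequence with the correct step directions, meaning adding a box on north or west steps according to rook presence and removing boxes appropriately, must arise from exactly one frp. I would handle this by induction on $|\lambda|$: removing a corner cell of $\lambda$ changes the boundary sequence locally by one application of Fomin's rule, and that rule is invertible. The Greene's-theorem step is standard once one notes that restricting to a rectangle is just an ordinary partial permutation matrix.
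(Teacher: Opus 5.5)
The paper does not prove this theorem. It cites Backelin, West, and Xin, and remarks that Krattenthaler later gave a bijective proof using Fomin's growth diagrams. Your argument is Krattenthaler's proof, and it is sound:
\begin{itemize}[noitemsep, topsep=0pt]
\item label each lattice point on the north-east boundary of $\lambda$ by the RSK shape of the rooks southwest of it;
\item observe that a chain of rooks fits in $\lambda$ exactly when it lies in the rectangle southwest of some boundary point;
\item invoke Greene's theorem, and transpose every label.
\end{itemize}
Your opening paragraph misdescribes this, though. It is not the Backelin--West--Xin strategy, and it is an explicit shape-preserving bijection (indeed an involution), not a matching of recursions. The growth-diagram route buys that explicitness, plus a stronger conclusion for free. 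Transposition swaps the maximal first-row length with the maximal first-column length along the boundary. So the pair (longest fitting increasing chain, longest fitting decreasing chain) has a symmetric joint distribution over the frps of each shape.

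One point in your \emph{main obstacle} paragraph needs repair. The corner-peeling induction cannot stay inside full rook placements. Deleting an outer corner cell that holds a rook leaves the smaller shape with an empty row and an empty column. The statement to prove by induction on $|\lambda|$ is therefore a bijection between two families:
\begin{itemize}[noitemsep, topsep=0pt]
\item $0$-$1$ fillings of $\lambda$ with at most one rook in each row and column;
\item boundary sequences from $\emptyset$ to $\emptyset$ that, read from the top-left end, grow by at most one box on each east step and shrink by at most one box on each south step.
\end{itemize}
The backward local rule is defined on every such corner configuration. It returns a configuration of the same kind together with the cell's entry, and the forward rule inverts it. Full rook placements then correspond exactly to the sequences in which every step changes the partition. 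This is because each boundary step adds or removes an entire column or row of $\lambda$, and the label changes precisely when that column or row contains a rook.

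Your phrase \emph{adding a box on north or west steps} also has the directions garbled. Traversing from the bottom-right end, north steps add a box and west steps remove one.
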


The case $k = 2$ was proved by West~\cite{west:permutations-wi:} and $k = 3$ by Babson and West~\cite{babson:the-permutation:}. Krattenthaler~\cite{krattenthaler:growth-diagrams:} gave an elegant bijective proof of the general case using Fomin's growth diagrams. This result has since been generalized to binary matrices by de Mier~\cite{mier:k-noncrossing-a:} and to words by Jel\'\i nek and Mansour~\cite{jeli-nek:wilf-equivalenc:}.

\begin{theorem}[Stankova and West~\cite{stankova:a-new-class-of-:}]
\label{thm:231-312-swe}
The permutations $231$ and $312$ are shape-Wilf-equivalent.
\end{theorem}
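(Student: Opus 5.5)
The plan is to prove the statement by showing that, for every Ferrers shape $\lambda$, the $231$-avoiding and $312$-avoiding frps of shape $\lambda$ arise from isomorphic generating processes. I would build an frp column by column from left to right, and argue that the two avoidance conditions impose ``isomorphic'' constraints at each step. Fix $\lambda$ with columns $1,\dots,m$ of weakly decreasing heights. A partial placement is an assignment of rooks to columns $1,\dots,j$ in distinct rows, each inside the board. For each of the two patterns, I would define the \emph{admissible rows} for column $j+1$. These are the rows $r$ such that placing a rook at $(j+1,r)$ lies within the board and completes no copy of the pattern whose enclosing square fits inside $\lambda$. The copies to consider are those using the new rook as the last point, together with copies that can no longer be avoided.

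The first step is to localize the containment condition. Because columns weakly decrease in height, a copy of $\sigma$ on rows $R$ and columns $C$ fits in $\lambda$ exactly when the cell (last column of $C$, top row of $R$) lies in $\lambda$. This is one corner check. With it I can describe, for each pattern, which empty rows are killed once the prefix is fixed. For $312$, the new rook plays the ``2''. It is forbidden when some earlier pair $c<b$ has $b$ to the left of $c$ in position, the value of $b$ above the new row, and the value of $c$ below it. The corner condition asks that the row of $b$ be at most the height of column $j+1$. For $231$, the new rook plays the ``1''. It is forbidden when there is an earlier ascent whose values both lie above it, with the top value again within the height of column $j+1$. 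In both cases the killed rows form a union of intervals determined by a ``record'' structure of the prefix, cut off at height $\lambda_{j+1}$.

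The second step is to encode the relevant state of a prefix by a label recording the number of admissible rows below each height threshold that future columns may have. I would then show that the label evolutions for $231$ and $312$ coincide up to a relabeling of the admissible rows. Concretely, if the admissible rows for column $j+1$ are $r_1<\cdots<r_s$, choosing the $i$th one for $231$ should produce a child label equal to that produced by choosing the $(s+1-i)$th, or some fixed matching, for $312$. This yields a generating-tree isomorphism that respects the shape, and hence equal counts for every $\lambda$. Once the isomorphism is established, unwinding it gives an explicit bijection. I expect it to agree with the Bloom--Saracino bijection used in Figure~\ref{fig:swe-example}.

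The main obstacle is the truncation by the shape. For square boards both classes are Catalan and the standard generating trees (labels $k\mapsto 2,3,\dots,k+1$) match easily. For a general $\lambda$, however, a forbidden configuration only counts if its corner lies under the boundary, and column heights shrink as we move right. The label must therefore remember how admissible rows are distributed relative to each future column height, not merely how many there are. My first attempt would be to take as label the sorted list of admissible heights. I would then prove by induction on $j$ that the $231$ and $312$ lists, after decreasing column height truncates them, are related by the same order-preserving correspondence. If that fails, I would instead run the induction on the number of cells of $\lambda$ by removing a corner cell, and compare the resulting recurrences directly.
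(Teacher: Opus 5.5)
\textbf{There is a genuine gap: the generating-tree isomorphism you rely on does not exist.} Your first step is fine. With column heights weakly decreasing, a copy fits exactly when the cell in its last column and top row lies in $\lambda$. Your descriptions of the rows killed by $312$ (new rook as the ``2'') and by $231$ (new rook as the ``1'') are also correct.

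The problem is what comes after. Your argument rests on the claim that, for each $\lambda$, the left-to-right column-by-column generating trees of $231$-avoiding and $312$-avoiding frps are isomorphic. You never produce the label or the matching that would show this, and in fact no such isomorphism exists for any choice of labels. A tree isomorphism preserves, at each depth $j$, the number of nodes with a descendant at full depth. That number is the count of distinct $j$-column prefixes of complete avoiding frps.

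Here is a counterexample. Let $\lambda$ be the $4\times 4$ square with its top-right cell removed (column heights $4,4,4,3$). The frps are the $\pi\in S_4$ with $\pi(4)\neq 4$, and a copy fails to fit exactly when its last column is $4$ and its top row is $4$.
\begin{itemize}
\item The $231$-avoiders are $1243, 1342, 1423, 1432, 2143, 3142, 3241, 4123, 4132, 4213, 4312, 4321$.
\item The $312$-avoiders are $1243, 1342, 1423, 1432, 2143, 2341, 2413, 2431, 3241, 3421, 4213, 4321$.
\end{itemize}
There are twelve of each, as there must be. But the prefixes $\pi(1)\pi(2)$ that occur are:
\begin{itemize}
\item for $231$, all ordered pairs except $23, 24, 34$ (nine);
\item for $312$, all ordered pairs except $31, 41$ (ten).
\end{itemize}
On the full square both patterns have nine such prefixes. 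Deleting the corner cell rescues the prefix $42$ for $312$, via $4213$, whose two $312$-copies $423$ and $413$ both have their top-right cell at the missing cell. It rescues nothing for $231$. So the two trees already differ at depth $2$. This happens even though $\lambda$ is self-conjugate, so equality of the totals is trivial here by inversion.

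Consequently, any scheme that matches the $j$th column of a $231$-avoider with the $j$th column of a $312$-avoider on the same shape must fail. That includes your ``$i$th admissible row $\leftrightarrow$ $(s+1-i)$th'' rule and every refinement of the label. It also means no bijection compatible with column prefixes exists, so your expectation of recovering the Bloom--Saracino bijection this way is misplaced.

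Your fallback of deleting a corner cell $(c,r)$ does not repair this. The frps that use the corner are harmless. A copy containing the corner rook would need it as both its rightmost and topmost point, which is impossible because neither $231$ nor $312$ ends with its maximum. So these frps correspond to avoiders on $\lambda$ with row $r$ and column $c$ deleted, for both patterns alike. The frps not using the corner, however, are the avoiders on $\lambda$ minus that cell that have no copy whose top-right cell is exactly $(c,r)$. Showing that these correction terms agree for $231$ and $312$ is essentially the whole content of the theorem. What is missing is a correspondence that is not built column by column, such as the one the paper attributes to Bloom and Saracino through labeled Dyck paths and a local relabeling.
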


A bijective proof was later given by Bloom and Saracino~\cite{bloom:a-simple-biject:}, and this is the bijection labeled~BS in Figure~\ref{fig:swe-example}. This bijection is wonderfully simple: they establish a correspondence between $231$-avoiding frps and labeled Dyck paths, then locally transform the labels, and finally map back to $312$-avoiding frps. Guo, Krattenthaler, and Zhang~\cite{guo:on-shape-wilf-e:} have since extended Theorem~\ref{thm:231-312-swe} to words.

Theorem~\ref{thm:inc-dec-swe} states that $123$ and $321$ are shape-Wilf-equivalent, just as $12$ and $21$ are. Combined with Proposition~\ref{prop:swe-sums}, this means that $123 = 12 \oplus 1$ is shape-Wilf-equivalent to $213 = 21 \oplus 1$.

By Proposition~\ref{prop:swe-sums}, one way to show that two permutations $\alpha$ and~$\beta$ are \emph{not} shape-Wilf-equivalent is to show that $\alpha \oplus 1$ and $\beta \oplus 1$ are not Wilf-equivalent. Enumerating the $123\oplus 1$-, $231\oplus 1$-, and $132\oplus 1$-avoiding permutations to length~$7$ rules out any additional equivalences, leaving three shape-Wilf-equivalence classes of permutations of length three: $\{123, 321, 213\}$, $\{231, 312\}$, and $\{132\}$.

Stankova~\cite{stankova:shape-wilf-orde:} showed that these equivalence classes can be ordered by avoidance: for any Ferrers board, there are at least as many $132$-avoiding frps as $321$-avoiding frps, and at least as many $321$-avoiding frps as $231$-avoiding frps. In other words, $132$ is the easiest pattern of length three to avoid on Ferrers boards of any shape, while $231$ is the hardest.

Computation shows that Proposition~\ref{prop:swe-sums} together with Theorems~\ref{thm:inc-dec-swe} and~\ref{thm:231-312-swe} account for all shape-Wilf-equivalences of permutations of length $6$ or less.

\begin{question}
Do Proposition~\ref{prop:swe-sums} and Theorems~\ref{thm:inc-dec-swe} and~\ref{thm:231-312-swe} imply all shape-Wilf-equivalences?
\end{question}

A potential converse to Proposition~\ref{prop:swe-sums} was raised by Burstein at \emph{Permutation Patterns 2025}.

\begin{question}[Burstein]
If $\alpha \oplus 1 \sim \beta \oplus 1$, must $\alpha$ and~$\beta$ be shape-Wilf-equivalent?
\end{question}

Although most Wilf-equivalences seem to arise from shape-Wilf-equivalence, there is at least one exception. Stankova~\cite{stankova:forbidden-subse:} proved that $1342 \sim 2413$, but these permutations are not shape-Wilf-equivalent because the enumeration of $1342\oplus 1$- and $2413\oplus 1$-avoiding permutations differs at length~$8$. A bijective proof of this Wilf-equivalence was later given by Bloom~\cite{bloom:a-refinement-of:}.

\subsection*{Toward necessary conditions}

While sufficient conditions for Wilf-equivalence have received considerable attention, necessary conditions have been lacking. At present, the only general method for proving that $\pi \not\sim \sigma$ is to enumerate the~$\pi$- and $\sigma$-avoiding permutations until the counts disagree. This is obviously unhelpful for proving general statements about infinite families of patterns, or for developing any structural understanding of when Wilf-equivalence fails.

A more refined approach would study permutations of small ``codimension'' above a pattern. For a permutation~$\beta$ of length $m$, define
\[
    g_k(\beta) \;=\; \bigl|\{\text{permutations of length } m+k \text{ that contain } \beta\}\bigr|.
\]
Since $|\Av_n(\beta)| = n! - g_{n-m}(\beta)$, we have $\pi \sim \sigma$ if and only if $g_k(\pi) = g_k(\sigma)$ for all $k$. Thus, understanding the function $g_k$ could yield necessary conditions for Wilf-equivalence.

For small $k$, some results are known. Pratt~\cite[p.~276]{pratt:computing-permu:} observed in 1973 that $g_1(\beta) = m^2 + 1$ for any permutation~$\beta$ of length $m$; that is, $g_1$ depends only on the length of~$\beta$, not on its structure. Ray and West~\cite{ray:posets-of-matri:} showed that
\[
    g_2(\beta) \;=\; \frac{m^4 + 2m^3 + m^2 + 4m + 4 - 2j}{2}
\]
for some integer $j$ with $0 \le j \le m-1$. However, the dependence of $j$ on~$\beta$ is not well understood. The author raised the following two problems at \emph{Permutation Patterns 2007}~\cite{vatter:problems-and-co:}.

\begin{problem}
Express the quantity $j$ in the Ray--West formula for $g_2(\beta)$ in terms of statistics of~$\beta$.
\end{problem}

\begin{problem}
Find a formula for $g_3(\beta)$.
\end{problem}

\subsection*{Unbalanced Wilf-equivalence}

So far we have discussed Wilf-equivalence of individual permutations, but the notion extends naturally to sets: two sets of permutations~$B$ and $B'$ are Wilf-equivalent if $|\Av_n(B)| = |\Av_n(B')|$ for all~$n$. One might expect Wilf-equivalent sets to have the same cardinality, but this is not the case.

Atkinson, Murphy, and Ru\v{s}kuc~\cite{atkinson:sorting-with-tw:} characterized the class of permutations sortable by two increasing stacks in series. Its basis is infinite,
\[
	\{2\ (2k{-}1)\ 4\ 1\ 6\ 3\ \cdots\ (2k)\ (2k{-}3) : k \ge 2\},
\]
but they showed that the class is nonetheless Wilf-equivalent to $\Av(1342)$, which was enumerated by B\'ona~\cite{bona:exact-enumerati:}.%
\footnote{A simpler derivation of the generating function of $\Av(1342)$ using full rook placements has since been given by Bloom and Elizalde~\cite[Theorem~4.3]{bloom:pattern-avoidan:}.}

Burstein and Pantone~\cite{burstein:two-examples-of:} give further examples of such \emph{unbalanced} Wilf-equivalences between finite sets, such as $\{1324, 3416725\} \sim \{1234\}$. These remain poorly understood.

\section{Derangements}
\label{sec:derangements}

A major focus in the study of permutation patterns has been the enumeration of specific permutation classes, especially those defined by relatively few, relatively short basis elements. Beyond mere ``stamp collecting'', the ability to enumerate a class serves as a proxy for understanding its structure, and as a measuring stick for the adequacy of existing techniques. By now, however, the low-hanging fruit appears to have mostly been picked.

In particular, the ``$2 \times 4$'' classes (those with basis consisting of two permutations of length $4$) have been almost completely exhausted. These classes served for many years as a testbed for different enumerative approaches, but of the $56$ symmetry classes and $38$ Wilf-equivalence classes, only three have unknown generating functions. Moreover, Albert, Homberger, Pantone, Shar, and Vatter~\cite{albert:generating-perm:} give evidence that the generating functions for these three remaining classes do not satisfy algebraic differential equations; this would imply that they are not D-finite (not even differentially algebraic). It remains possible that these generating functions have nice continued fraction expressions, though no $2 \times 4$ class is known to have such a form.

As exact enumeration of permutation classes reaches maturity, it is natural to seek refinements that impose additional structure. One may ask, for instance, to count the alternating (up-down) permutations in a class, or the even permutations, or the Dumont permutations of the first kind, or the involutions. Many such refinements have proved tractable.

However, one type of permutation has remained stubbornly difficult to count, even in very well-behaved permutation classes: derangements. Recall that a \emph{derangement} is a permutation with no fixed points (entries satisfying $\pi(i)=i$, lying on the main diagonal of the plot). We survey the few known results below, but quickly reach open questions.

\subsection*{Derangements avoiding a pattern of length three}

Robertson, Saracino, and Zeilberger~\cite{robertson:refined-restric:} initiated the study of pattern-avoiding derangements. Both the inverse and reverse-complement symmetries preserve the number of fixed points, so they act as symmetries on the set of derangements. This reduces the consideration of derangements avoiding a pattern of length three to four cases: $\{123\}$, $\{132, 213\}$, $\{231, 312\}$, and $\{321\}$.
%
%

Robertson, Saracino, and Zeilberger proved that for $\beta \in \{132, 213, 321\}$, the~$\beta$-avoiding derangements are counted by Fine's sequence (\OEISlink{A000957}):
\[
	0,\ 1,\ 2,\ 6,\ 18,\ 57,\ 186,\ 622,\ 2120,\ 7338,\ 25724,\ 91144,\ \dots.
\]
The coincidence between $132$ and $321$ is surprising, since these patterns lie in different symmetry classes. Of course they are Wilf-equivalent (both avoidance classes are counted by the Catalan numbers), but the coincidence for derangements does not follow from that. In fact, Robertson, Saracino, and Zeilberger proved something stronger: the distribution of the number of fixed points is the same across all $132$-avoiding and all $321$-avoiding permutations.

Elizalde~\cite{elizalde:fixed-points-an:} strengthened this further, proving that the joint distribution of the number of fixed points and the number of excedances (entries satisfying $\pi(i) > i$) is the same for $132$-avoiders and $321$-avoiders. Elizalde and Pak~\cite{elizalde:bijections-for-:} later gave a bijective proof of this result.%
\footnote{The chronology here is unusual: Elizalde and Pak's bijective paper appeared in 2004, while Elizalde's initial result, though presented at \emph{FPSAC 2003}, was not formally published until 2011, in the Zeilberger Festschrift volume of the \emph{Electronic Journal of Combinatorics}.}

%
%

For $\beta \in \{231, 312\}$, the~$\beta$-avoiding derangements are counted by \OEISlink{A258041}:
\[
    0,\ 1,\ 1,\ 4,\ 10,\ 31,\ 94,\ 303,\ 986,\ 3284,\ 11099,\ 38024,\ \dots.
\]
Robertson, Saracino, and Zeilberger compute the first eight terms of this sequence~\cite[Table~3]{robertson:refined-restric:}, and prove~\cite[Theorem~7.1]{robertson:refined-restric:} that there are strictly fewer $231$-avoiding derangements than $132$-avoiding derangements of each length~$n\ge 3$ (one might skip ahead to Figure~\ref{fig-derangement-ratios}; it's not even close). Elizalde~\cite[Theorem~3.7]{elizalde:multiple-patter:} gives a continued fraction expression for the generating function for $312$-avoiding permutations according to the number of fixed points.%
\footnote{Given the generality of the continued fraction of Blitvi\'{c} and Steingr\'{\i}msson~\cite{blitvic:permutations-mo:}, one might hope it could enumerate these permutations. It can be specialized to count $231$-avoiding permutations, or to count derangements, but not to count $231$-avoiding derangements, nor any of the other pattern-avoiding derangements considered in this section.}
%
%

This leaves the case of $123$-avoiding derangements. Here the sequence begins
\[
    0,\ 1,\ 2,\ 7,\ 20,\ 66,\ 218,\ 725,\ 2538,\ 8646,\ 31118,\ 108430,\ \dots.
\]
This is sequence \OEISlink{A318232} in the OEIS, and that entry references the work of Fu, Tang, Han, and Zeng~\cite{fu:qt-catalan-numb:}. They define
\[
    G_n(t) = \sum_\pi t^{\mathsf{exc}\,\pi},
\]
where the sum is over $123$-avoiding derangements of length~$n$ and $\mathsf{exc}\,\pi$ denotes the number of excedances of~$\pi$. In particular, $G_n(1)$ is the number of $123$-avoiding derangements.

Fu, Tang, Han, and Zeng also consider $G_n(-1)$, for which they have a conjecture~\cite[Conjecture~5.11]{fu:qt-catalan-numb:}. For odd~$n$, we always have $G_n(-1) = 0$: the reverse-complement symmetry preserves $123$-avoidance and, for derangements, exchanges excedances with non-excedances. Thus if~$\pi$ has $k$ excedances, its reverse-complement has $n - k$, and when~$n$ is odd these have opposite parities, so the contributions cancel. For even~$n$, they conjecture that $(-1)^{n/2} G_n(-1)$ is positive. This amounts to a statement about whether, for a given even length, there are more $123$-avoiding derangements with an even number of excedances or with an odd number.

A $123$-avoiding permutation can have at most two fixed points. Elizalde~\cite[Corollary~3.2]{elizalde:multiple-patter:} shows that the enumeration of $123$-avoiding permutations by number of fixed points reduces to enumerating those with exactly two fixed points, as the other cases can be computed from this count. He gives a formula~\cite[Theorem~3.3]{elizalde:multiple-patter:} for the number of $123$-avoiding permutations with exactly two fixed points, but it involves a quadruple sum. Using this formula, Elizalde~\cite[Theorem~3.4]{elizalde:multiple-patter:} proves that for $n \ge 4$, there are strictly more $123$-avoiding derangements than $132$-avoiding derangements of length~$n$ (see Figure~\ref{fig-derangement-ratios}). This inequality had been observed based on the data for $4\le n\le 8$ by Robertson, Saracino, and Zeilberger, and Elizalde reports that it was conjectured independently by B\'{o}na and Guibert.

A closed-form enumeration remains elusive. Elizalde~\cite[p.~8]{elizalde:multiple-patter:} notes that he has ``not been able to find a satisfactory expression'' for the generating function, although he expresses hope~\cite[p.~39]{elizalde:multiple-patter:} that a simpler formula than his Theorem~3.3 exists. Birmajer, Gil, Tirrell, and Weiner~\cite[Conjecture~A.2]{birmajer:pattern-avoidin:} conjecture a simpler formula for the number of $123$-avoiding permutations with two fixed points, parameterized by the distance between the fixed points; if proved, this would reduce Elizalde's quadruple sum to a double sum. The two formulas agree through $n = 200$.

Using the first $200$ terms, Pantone (personal communication) was able to fit a linear differential equation to the generating function, suggesting that it is D-finite. Unfortunately, that differential equation is quite unwieldy (far too long to include here). It remains possible that a nicer expression exists in the form of a continued fraction, as Elizalde~\cite[Theorem~3.7]{elizalde:multiple-patter:} found for $231$-avoiding permutations according to the number of fixed points.

\subsection*{Proportions}

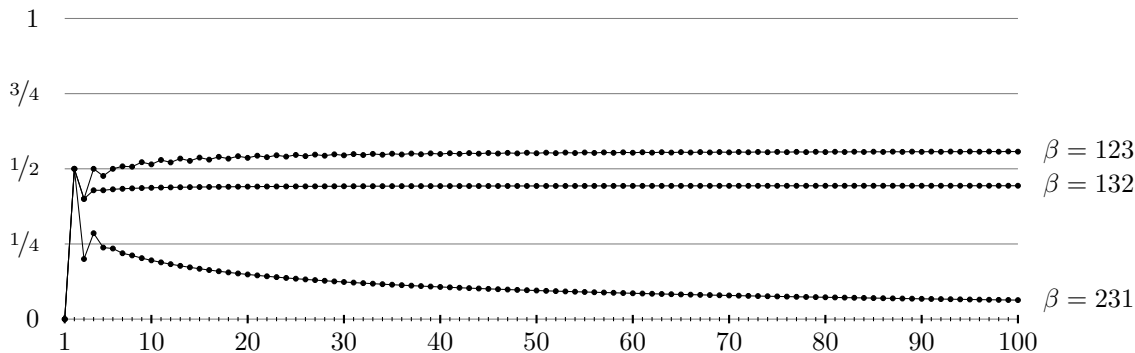
\begin{figure}[t]
\begin{tikzpicture}[y=5cm, x=.16cm, scale=.8]
	\foreach \y in {0,1,...,4}
		\draw [gray] (1,\y/4)--(100,\y/4);
	\foreach \x in {1,2,...,100}
 		\draw (\x,1pt) -- (\x,-1pt) node[anchor=north] {};
	\foreach \x in {10,20,...,100}
 		\draw [thick] (\x,2pt) -- (\x,-2pt) node[anchor=north] {$\x$};
 	\draw [thick] (1,2pt) -- (1,-2pt) node[anchor=north] {$1$};
	\node[left=6pt] at (1,0) {$0$};
	\node[left=6pt] at (1,.25) {$\nicefrac{1}{4}$};
	\node[left=6pt] at (1,.5) {$\nicefrac{1}{2}$};
	\node[left=6pt] at (1,.75) {$\nicefrac{3}{4}$};
	\node[left=6pt] at (1,1) {$1$};
	\node[right=6pt] at (100, 0.063249932) {$\beta=231$};
	\node[right=6pt] at (100, 0.443702073) {$\beta=132$};
	\node[right=6pt] at (100, 0.557023882) {$\beta=123$};
	\draw plot[mark=*, mark size=.04cm] file {der231-ratios.txt};
	\draw plot[mark=*, mark size=.04cm] file {der132-ratios.txt};
	\draw plot[mark=*, mark size=.04cm] file {der123-ratios.txt};
\end{tikzpicture}\\
\caption{The proportion of~$\beta$-avoiding permutations that are derangements, by length.}
\label{fig-derangement-ratios}
\end{figure}

Figure~\ref{fig-derangement-ratios} plots the proportion of~$\beta$-avoiding permutations of length~$n$ that are derangements, for each of the three symmetry classes of patterns of length three, with~$n$ ranging from $1$ to $100$. The figure illustrates the comparisons established above: $123$-avoiding permutations are more likely to be derangements than $132$-avoiding permutations, which in turn are more likely to be derangements than $231$-avoiding permutations. It is tempting to observe that this ordering is reversed from the number of fixed points in the patterns themselves; patterns with more fixed points appear to be avoided by proportionally more derangements. Curious.

For~${\beta \in \{132, 213, 321\}}$, we have the Robertson, Saracino, and Zeilberger result that the~$\beta$-avoiding derangements are counted by Fine's sequence. Denoting these numbers by $F_n$, they satisfy $C_n=2F_n+F_{n-1}$, where $C_n$ denotes the Catalan numbers, so $C_n/F_n=2+F_{n-1}/F_n$. Since the growth rate of Fine's sequence is also~$4$ like the Catalan numbers, we see that $C_n/F_n\to\nicefrac{9}{4}$, and thus $F_n/C_n\to\nicefrac{4}{9}$.

For $\beta = 123$, Figure~\ref{fig-derangement-ratios} suggests that the number of $123$-avoiding derangements grows like $\delta C_n$ for some limiting proportion $\delta \ge \nicefrac{1}{2}$, where $C_n$ denotes the~$n$th Catalan number. Assuming this asymptotic form and using the first~$200$ terms of the enumeration, Pantone (personal communication) applied the method of differential approximants to conjecture that the limiting proportion is precisely $\nicefrac{9}{16}$.

\begin{conjecture}[Pantone]
The limiting proportion of derangements among $123$-avoiding permutations is equal to~$\nicefrac{9}{16}$.
\end{conjecture}

For $\beta\in\{231,312\}$, Figure~\ref{fig-derangement-ratios} and numerical experimentation suggest that the proportion tends to~$0$.

\begin{question}
Is the limiting proportion of derangements among $231$-avoiding permutations equal to~$0$?
\end{question}

These observations suggest broader questions. For a permutation class $\mathcal{C}$, let $\mathcal{C}^\circ$ denote the set of derangements in $\mathcal{C}$.

\begin{question}
Does the ratio $|\mathcal{C}^\circ_n|/|\mathcal{C}_n|$ converge for every permutation class $\mathcal{C}$?
\end{question}

Numerous follow-up questions suggest themselves. For the class of all permutations, the limit is of course $\nicefrac{1}{e}$. What values in $[0,1]$ are achievable? The case of~$\Av(132)$ shows that these limits can be greater than $\nicefrac{1}{e}$, but how much greater? Is there a largest possible limit strictly less than $1$? For which classes is this limit~$0$?

\subsection*{Separable derangements}

The \emph{skew sum} of permutations~$\sigma$ of length~$m$ and~$\tau$ of length~$n$ is the permutation $\sigma \ominus \tau$ of length~${m+n}$ defined by
\[
    (\sigma \ominus \tau)(i) \;=\;
    \begin{cases}
        \sigma(i) + n & \text{for } 1 \le i \le m, \\
        \tau(i - m) & \text{for } m + 1 \le i \le m + n.
    \end{cases}
\]
Visually, the plot of $\sigma \ominus \tau$ places the plot of~$\tau$ below and to the right of the plot of~$\sigma$.

A permutation is \emph{separable} if it can be built from the singleton permutation $1$ by repeated application of sum and skew sum. Bose, Buss, and Lubiw~\cite{bose:pattern-matchin:} gave the separable permutations their name and showed that they are precisely $\Av(2413, 3142)$. Notable subclasses include $\Av(132)$ and $\Av(231)$, which we have just discussed, as well as the layered permutations discussed earlier. The separable permutations of length~$n$ are counted by the large Schr\"{o}der numbers (\OEISlink{A006318}):
\[
	1,\ 2,\ 6,\ 22,\ 90,\ 394,\ 1806,\ 8558,\ 41586,\ 206098,\ 1037718,\ 5293446,\ \dots.
\]
This enumeration was first established by Shapiro and Stephens~\cite{shapiro:bootstrap-perco:} in 1991, though they worked with the recursive definition via sums and skew sums. This was before Bose, Buss, and Lubiw had identified the basis $\{2413, 3142\}$, which led to an amusing historical coincidence: As West~\cite{west:generating-tree:95} reports, Shapiro and Getu later conjectured that the large Schr\"{o}der numbers also count $\Av(2413, 3142)$, not realizing Shapiro had already proved this himself, albeit in a disguised form. West~\cite{west:generating-tree:95} gave a proof in 1995.%
\footnote{West wrote that this was ``the first non-trivial enumerative result to be obtained for any problem involving forbidden subsequences of length $k \ge 4$.'' This overlooks Gessel's 1990 paper~\cite{gessel:symmetric-funct:}, which gives an explicit formula for $1234$-avoiding permutations. Since~\cite{gessel:symmetric-funct:} is cited in~\cite{west:generating-tree:95}, this is a puzzling omission; perhaps West meant the first involving non-monotone patterns, or more than one pattern.}
Stankova~\cite{stankova:forbidden-subse:} then gave the much more natural proof, decomposing these permutations into sums and skew sums, which brings the story full circle to Shapiro and Stephens.

For classes with only finitely many simple permutations (a term we won't define here), such as the separable permutations, Brignall, Huczynska, and Vatter~\cite{brignall:simple-permutat:alg:} show how to systematically obtain algebraic equations for the generating functions of various natural subsets: the alternating permutations, the even permutations, the Dumont permutations of the first kind, the involutions, and many others. However, their method does not apply to derangements. The obstacle is that being a derangement does not lie in a finite "query-complete set of properties" (in the sense of that paper). The proof of this fact uses displacement sets, $D(\pi) = \{\pi(i) - i : i \in [n]\}$, so a permutation~$\pi$ is a derangement if and only if $0\notin D(\pi)$. While $D(\pi\oplus\sigma)=D(\pi)\cup D(\sigma)$, whether a skew sum $\pi \ominus \sigma$ is a derangement depends on the interaction between $D(\pi)$ and $D(\sigma)$, and so permutations with different displacement sets must be tracked separately.

Thus despite the well-understood structure of separable permutations, no explicit expression for the generating function of separable derangements has been found. These permutations are counted by \OEISlink{A393394},
\[
	0,\ 1,\ 2,\ 7,\ 30,\ 124,\ 560,\ 2610,\ 12470,\ 60955,\ 302930,\ 1528621,\ \dots.
\]
These terms were computed by tracking, for each length, how many separable permutations realize each possible displacement set. The count of separable derangements is then the sum over displacement sets not containing $0$.

\begin{problem}
Find the generating function for the number of separable derangements of length~$n$.
\end{problem}

The ratios $|\mathcal{S}^\circ_n|/|\mathcal{S}_n|$, where $\mathcal{S}$ denotes the class of separable permutations, appear to be monotonically decreasing, approaching something between $\nicefrac{1}{5}$ and $\nicefrac{1}{4}$. However, these guesses are based only on the enumeration up to length~$18$ and so should be taken with a grain of salt.

\begin{question}
What is the limiting proportion of derangements among separable permutations, assuming this limit exists?
\end{question}

\section{Sorting}
\label{sec:sorting}

The study of sorting devices has been intertwined with permutation patterns since Knuth's analysis of stack-sorting in \emph{The Art of Computer Programming}. In this section, we consider several sorting machines: multiple stacks in series, Atkinson's enhanced $(r,s)$-stacks, and restricted containers (also known as $\mathcal{C}$-machines). Each machine gives rise to a permutation class (the permutations it can sort/generate), and there are numerous open problems.

\subsection*{Stacks}

A \emph{stack} is a last-in first-out linear sorting device with push and pop operations. The greedy algorithm for stack-sorting a permutation $\pi = \pi(1)\pi(2) \cdots \pi(n)$ proceeds as follows. First, push $\pi(1)$ onto the stack. At a later stage, suppose that the entries $\pi(1), \dots, \pi(i-1)$ have all been either output or pushed onto the stack, so $\pi(i)$ is the next entry in the input. If $\pi(i)$ is less than every entry currently on the stack, push $\pi(i)$ onto the stack. Otherwise, pop entries off the stack (to the output) until $\pi(i)$ is less than every remaining stack entry, then push $\pi(i)$ onto the stack. After all entries have been read, pop any remaining entries from the stack to the output. This produces a permutation~$s(\pi)$.

A permutation is \emph{West~$t$-stack sortable} if $s^t(\pi)$ is the identity permutation. We caution that for~${t\ge 2}$, the set of West-$t$-stack-sortable permutations does not form a permutation class. As one example,~$35241$ is West-$2$-stack-sortable, because
\[
	s(s(35241))=s(32145)=12345,
\]
but its subpermutation $3241$ is \emph{not} West-$2$-stack-sortable, because
\[
	s(s(3241))=s(2314)=2134\neq 1234.
\]
Indeed, West~\cite[Theorem~4.2.18]{west:permutations-wi:} (and later in~\cite{west:sorting-twice-t:}) showed that the West-$2$-stack-sortable permutations (he did not call them this) are characterized by avoiding $2341$ in the standard sense, and also avoiding the \emph{barred pattern} $3\overline{5}241$; a permutation avoids $3\overline{5}241$ if every occurrence of the pattern~$3241$ can be extended to an occurrence of the pattern~$35241$. 

We will not belabor West stack-sorting any further, and instead turn to sorting with $t$ stacks in series: a permutation is sortable by $t$ stacks in series if there exists \emph{some} sequence of operations that transforms it into the identity, where each operation pushes to or pops from one of the $t$ stacks, and entries pass through the stacks in order (from the first stack to the second, and so on). Unlike the West notion, this definition behaves as one might reasonably hope. In particular, the set of permutations sortable by $t$ stacks in series forms a permutation class: if a permutation can be sorted, then any subpermutation can be sorted by running the same operations while ignoring the entries not present in the subpermutation.

\subsection*{One and two stacks}

The permutations sortable by a single stack are precisely $\Av(231)$, as observed by Knuth~\cite[Exercise~2.2.1-5]{knuth:the-art-of-comp:1}. For two stacks in series, Tarjan~\cite[Lemma~10]{tarjan:sorting-using-n:} found that the shortest unsortable permutation has length $7$. We quote his ``proof''%
\footnote{Pratt~\cite{pratt:computing-permu:}, in a paper from the same era, begins a proof with ``We leave to the reader the pleasure of convincing himself that none of the permutations in Figure~3 can be computed by a deque.'' The implicit assumption in both papers, one published in the \emph{Journal of the ACM} and the other in \emph{STOC}, that the reader would verify such claims by hand rather than by computer is now somewhat quaint.}:
\begin{quote}
$(2435761)$ is unsortable using two stacks, as the reader may easily verify. Conversely, every sequence of length~$6$ or less may be sorted using two stacks. Exhaustive case analysis will verify this fact.
\end{quote}

In a 1992 technical report~\cite[Theorem~2]{atkinson:sorting-permuta:}, Atkinson seems to have been the first to find all~$22$ basis elements of length~$7$ for two stacks in series. Murphy~\cite[Proposition~257]{murphy:restricted-perm:} proved that the class of permutations sortable by two stacks in series has an infinite basis, and in addition to the $22$ basis elements of length~$7$, he lists the $51$ basis elements of length~$8$. More generally, the number of basis elements of length~$n$, for $n\ge 7$, is given by \OEISlink{A111576}:
\[
	22,\ 51,\ 146,\ 604,\ \dots.
\]
Computing these basis elements up to $n=10$ is not difficult once one views the sortable permutations in terms of products of $231$-avoiding permutations, as we explain below.

\subsection*{Sorting, generating, and duality}

\newcommand{\sortingmachine}[4]{%

	\begin{footnotesize}
	\begin{tikzpicture}[scale=0.3, baseline=(current bounding box.center)]

		\draw [darkgray, very thick, line cap=round] (-1.5,-1.5) rectangle (1.5,1.5);
		\node at (0,0) {\normalsize \ensuremath{#2}};

		\ifthenelse{\equal{#4}{right}}%
		{	

			\node at (-3,.325) {$\xrightarrow{\phantom{\text{BS}}}$};
			\node at ( 3,.325) {$\xrightarrow{\phantom{\text{BS}}}$};

			\begin{scope}[shift={(-8.5,0)}]
				\draw [darkgray, very thick, line cap=round] (-4,-.75)--(4,-.75);
				\node at (0,0) {\ensuremath{#1}};
				\node at (0,-1.5) {\scriptsize input};
			\end{scope}

			\begin{scope}[shift={(8.5,0)}]
				\draw [darkgray, very thick, line cap=round] (-4,-.75)--(4,-.75);
				\node at (0,0) {\ensuremath{#3}};
				\node at (0,-1.5) {\scriptsize output};
			\end{scope}

		}%
		{	

			\node at (-3,.325) {$\xleftarrow{\phantom{\text{BS}}}$};
			\node at ( 3,.325) {$\xleftarrow{\phantom{\text{BS}}}$};

			\begin{scope}[shift={(-8.5,0)}]
				\draw [darkgray, very thick, line cap=round] (-4,-.75)--(4,-.75);
				\node at (0,0) {\ensuremath{#3}};
				\node at (0,-1.5) {\scriptsize output};
			\end{scope}

			\begin{scope}[shift={(8.5,0)}]
				\draw [darkgray, very thick, line cap=round] (-4,-.75)--(4,-.75);
				\node at (0,0) {\ensuremath{#1}};
				\node at (0,-1.5) {\scriptsize input};
			\end{scope}
		}

	\end{tikzpicture}
	\end{footnotesize}
}

We pause to discuss a general framework that clarifies several arguments to follow. We think of sorting machines as transforming an input permutation (generally drawn on the right in diagrams, so that the entries enter the machine in their natural order $\pi(1)$, $\pi(2)$, $\dots$) to an output permutation. Sorting is then the special case where the output is the identity.
\begin{center}
	\sortingmachine{\pi(1)\pi(2)\cdots \pi(n)}{M}{12\cdots n}{left}
\end{center}
Generating is the special case where the input is the identity.
\begin{center}
	\sortingmachine{12\cdots n}{M}{\pi(1)\pi(2)\cdots \pi(n)}{left}
\end{center}

A machine is \emph{symbol oblivious} if its allowed operations are indifferent to the values of the symbols being manipulated. (This terminology follows Atkinson~\cite{atkinson:permuting-machi:}, although he simply uses \emph{oblivious}; we add ``symbol'' to avoid confusion with other uses of the term ``oblivious'' in computer science.) A stack is symbol oblivious: a push or pop is permitted regardless of what symbol is involved.\footnote{Although not under West's notion, since the rule ``pop if the top of the stack is smaller than the next input entry'' depends on comparing symbol values.} Compositions of stacks in series are also symbol oblivious.

For a symbol-oblivious machine, if we relabel the input symbols and perform the same sequence of operations, we obtain the correspondingly relabeled output. Suppose a symbol-oblivious machine $M$ can transform~$\pi$ into~$\sigma$. Relabeling by an arbitrary permutation~$\tau$ shows that the same sequence of operations allows $M$ to transform $\tau \circ \pi$ into $\tau \circ \sigma$. In particular, taking $\tau = \sigma^{-1}$, we see that $M$ can transform $\sigma^{-1} \circ \pi$ into the identity. In other words:

\begin{proposition}
\label{prop:sort-generate}
Let $M$ be a symbol-oblivious machine. Then $M$ can transform $\pi$ into $\sigma$ if and only if $M$ can sort $\sigma^{-1} \circ \pi$. In particular, $M$ can generate $\pi$ if and only if $M$ can sort $\pi^{-1}$.
\end{proposition}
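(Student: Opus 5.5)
The plan is to make the relabeling argument sketched just before the statement precise, and then apply it in both directions. The whole proof should be a short formalization of the phrase ``the same sequence of operations''; the equivalence then follows by applying the relabeling lemma twice, once with $\tau = \sigma^{-1}$ and once with its inverse.

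First I will formalize the model. An input is a word $\pi(1)\pi(2)\cdots\pi(n)$ of distinct symbols, and a run of $M$ is a finite sequence of operations, each permitted or not according to the current configuration. By symbol obliviousness, whether an operation is permitted depends only on the \emph{positions} occupied, not on the symbols there. Each operation moves one symbol from one location to another. After a complete run, the output is determined by the operation sequence alone: there is a permutation $\theta$ of positions, depending only on the operation sequence, such that the $i$th output symbol is the input symbol in position $\theta(i)$. Thus the output is $\pi\circ\theta$.

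The key lemma is that relabeling commutes with runs. Let $\tau$ be any bijection of symbols, applied letterwise. If an operation sequence is valid on input $\pi$ and yields output $\sigma$, then the same sequence is valid on input $\tau\circ\pi$ and yields $\tau\circ\sigma$. I will prove this by induction on the number of operations. Each intermediate configuration for the relabeled input is the letterwise image under $\tau$ of the original configuration. Validity is preserved because, by obliviousness, validity depends only on the shape of the configuration, and relabeling does not change that shape. Equivalently, the output is $(\tau\circ\pi)\circ\theta = \tau\circ(\pi\circ\theta) = \tau\circ\sigma$.

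The main obstacle is really only definitional. I need to pin down ``symbol oblivious'' so that permissibility is genuinely independent of the symbols, which excludes rules like West's. I also need the machine to act on arbitrary labels rather than only on $\{1,\dots,n\}$ in a fixed order.

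With the lemma in hand, the two directions are as follows.
\begin{itemize}
\item[($\Rightarrow$)] If $M$ transforms $\pi$ into $\sigma$, take $\tau=\sigma^{-1}$. The lemma gives that $M$ transforms $\sigma^{-1}\circ\pi$ into $\sigma^{-1}\circ\sigma=\id$, so $M$ sorts $\sigma^{-1}\circ\pi$.
\item[($\Leftarrow$)] If $M$ sorts $\sigma^{-1}\circ\pi$, apply the lemma with $\tau=\sigma$. Then $M$ transforms $\sigma\circ\sigma^{-1}\circ\pi=\pi$ into $\sigma\circ\id=\sigma$.
\end{itemize}
For the final sentence of the statement, generating $\pi$ means transforming $\id$ into $\pi$. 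By the equivalence, this holds exactly when $M$ sorts $\pi^{-1}\circ\id=\pi^{-1}$.
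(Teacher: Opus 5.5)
Your proposal is correct and follows the paper's own argument. The paper likewise observes that relabeling by an arbitrary $\tau$ lets the same operation sequence transform $\tau\circ\pi$ into $\tau\circ\sigma$, and then takes $\tau=\sigma^{-1}$; you formalize this via the position permutation $\theta$ and also spell out the converse with $\tau=\sigma$, which the paper leaves implicit.
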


For example, a single stack sorts precisely the class $\Av(231)$. Since $312^{-1} = 231$, Proposition~\ref{prop:sort-generate} implies that a single stack generates the class $\Av(312)$ from the identity.

More generally, symbol obliviousness clarifies what two stacks in series can generate. Suppose entries pass through the first stack and enter the second stack in the order $\sigma(1), \sigma(2), \ldots, \sigma(n)$. Since a single stack generates $\Av(312)$, we have $\sigma \in \Av(312)$. By symbol obliviousness, the second stack performs some sequence of operations that would transform $12\cdots n$ into some $\tau \in \Av(312)$. Since the actual symbols input into the second stack are $\sigma(1), \ldots, \sigma(n)$ rather than $1, \ldots, n$, the output is relabeled accordingly, generating $\sigma \circ \tau$. Thus two stacks in series generate precisely $\Av(312) \circ \Av(312)$.

This yields a simple method for computing the basis mentioned earlier: generate all elements of $\Av(231) \circ \Av(231)$ up to the desired length, and identify the minimal permutations that do not appear.

A second useful observation concerns running machines backwards. Suppose that a machine $M$ can transform~$\pi$ into $\sigma$ via some sequence of operations.
\begin{center}
	\sortingmachine{\pi(1)\pi(2)\cdots \pi(n)}{M}{\sigma(1)\sigma(2)\cdots\sigma(n)}{left}
\end{center}
Reading that sequence backwards describes a way for a ``reversed machine'' $M^{\mathrm{r}}$ to transform~$\sigma^{\mathrm{r}}$ into~$\pi^{\mathrm{r}}$. 
\begin{center}
	\sortingmachine{\sigma(1)\sigma(2)\cdots\sigma(n)}{M^{\mathrm r}}{\pi(1)\pi(2)\cdots \pi(n)}{right}
\end{center}
In general, $M^{\mathrm{r}}$ may be a different machine than $M$, and we say that $M$ is \emph{reversible} if $M^{\mathrm{r}} = M$. Stacks in series are reversible (reverse the roles of input and output, reverse the order of the stacks, and exchange pushes with pops).

If we assume that $M$, and hence also $M^{\mathrm{r}}$, is symbol oblivious, and that $M$ can sort~$\pi$, then $M^{\mathrm{r}}$ can transform $\rho = n(n-1)\cdots 21$ into $\pi^{\mathrm{r}}$.
\begin{center}
	\sortingmachine{n\cdots 21}{M^{\mathrm r}}{\pi(1)\pi(2)\cdots \pi(n)}{right}
\end{center}
Relabeling the entries according to $(\pi^{\mathrm{r}})^{-1}$, we see that $M^{\mathrm{r}}$ transforms $(\pi^{\mathrm{r}})^{-1} \circ \rho = \rho \circ \pi^{-1} \circ \rho = (\pi^{\mathrm{rc}})^{-1}$ into the identity, which we record below.

\begin{proposition}
\label{prop:two-stack-dual}
If a symbol-oblivious machine $M$ can sort~$\pi$, then the reversed machine $M^{\mathrm{r}}$ can sort $(\pi^{\mathrm{rc}})^{-1} = \rho \circ \pi^{-1} \circ \rho$.
\end{proposition}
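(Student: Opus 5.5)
The plan is to combine the reversal observation with symbol obliviousness, exactly as in the discussion preceding the statement, and to make each step precise. First, I will fix the conventions: a machine transforms an input word (read left to right, entering as $\pi(1),\pi(2),\dots$) into an output word, and $M$ sorts $\pi$ if it transforms $\pi$ into $12\cdots n$. Suppose $M$ sorts $\pi$ via a sequence of operations $o_1,\dots,o_m$.

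The first step is the reversal. By definition of the reversed machine, the sequence of inverse operations $o_m^{-1},\dots,o_1^{-1}$ is a legal run of $M^{\mathrm{r}}$. In this run, the roles of input and output are exchanged, and so are the ends from which words are read. Hence $M^{\mathrm{r}}$ transforms the reverse of the output, $(12\cdots n)^{\mathrm{r}}=\rho$, into the reverse of the input, $\pi^{\mathrm{r}}=\pi\circ\rho$. This is the one place where care is needed: I must check that reading the output backwards yields exactly $\rho$ as an input word and $\pi^{\mathrm{r}}$ as an output word, with indices matching the convention "entries enter in the order $w(1),w(2),\dots$". I expect this bookkeeping to be the main (mild) obstacle. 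I would verify it on a single stack, where pops and pushes interchange, and on the general definition, before trusting it.

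The second step uses that $M^{\mathrm{r}}$ is symbol oblivious, which holds since its operations are just those of $M$ run backwards and so are independent of symbol values. I then apply the relabeling principle behind Proposition~\ref{prop:sort-generate}: if $M^{\mathrm{r}}$ transforms $\alpha$ into $\beta$, it transforms $\tau\circ\alpha$ into $\tau\circ\beta$ for any $\tau$. Taking $\alpha=\rho$, $\beta=\pi\circ\rho$, and $\tau=\beta^{-1}=\rho\circ\pi^{-1}$, we get that $M^{\mathrm{r}}$ transforms $\rho\circ\pi^{-1}\circ\rho$ into the identity. Equivalently, by Proposition~\ref{prop:sort-generate} directly, $M^{\mathrm{r}}$ sorts $(\pi\circ\rho)^{-1}\circ\rho$.

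Finally, I identify this permutation with $(\pi^{\mathrm{rc}})^{-1}$. Using $\pi^{\mathrm{rc}}=\rho\circ\pi\circ\rho$ and $\rho^{-1}=\rho$, we get $(\pi^{\mathrm{rc}})^{-1}=\rho^{-1}\circ\pi^{-1}\circ\rho^{-1}=\rho\circ\pi^{-1}\circ\rho$, which completes the proof.
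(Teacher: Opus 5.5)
Your proposal is correct and follows essentially the same route as the paper. Both arguments reverse the sorting run to see that $M^{\mathrm{r}}$ transforms $\rho$ into $\pi^{\mathrm{r}}$. They then use symbol obliviousness to relabel by $(\pi^{\mathrm{r}})^{-1}=\rho\circ\pi^{-1}$, so that $M^{\mathrm{r}}$ sorts $\rho\circ\pi^{-1}\circ\rho=(\pi^{\mathrm{rc}})^{-1}$.
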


Since stacks in series are symbol oblivious and reversible, if~$\pi$ can be sorted by $t$ stacks in series, then $(\pi^{\mathrm{rc}})^{-1}$ can also be sorted by $t$ stacks in series. As Figure~\ref{fig-symmetries-square-perms} shows, the permutation $(\pi^{\mathrm{rc}})^{-1}$ is the reflection of~$\pi$ about the anti-diagonal. This permutation has been called the ``dual'' of~$\pi$ (Murphy~\cite[Section~8.1.2]{murphy:restricted-perm:}) or the ``two-stack dual'' (Smith and Vatter~\cite{smith:a-stack-and-a-p:}). We suggest instead the term \emph{sorting dual}, since this duality applies to any symbol-oblivious reversible machine.

\subsection*{General bounds}

For general $t$, Knuth presents an argument in \emph{The Art of Computer Programming}, Volume~3~\cite[Solution to Exercise 5.2.4-19]{knuth:the-art-of-comp:3} showing that if all permutations of length~$n$ can be sorted by $t$ stacks in series, then all permutations of length $2n$ can be sorted by $t+1$ stacks in series. (This argument also appears in Tarjan~\cite[Lemma~11]{tarjan:sorting-using-n:}.)

We sketch the proof. Let~$\pi$ be a permutation of length $2n$ and suppose that all permutations of length~$n$ can be sorted by $t$ stacks in series. By symbol-obliviousness, all permutations of length~$n$ can be transformed into $\rho = n(n-1)\cdots 21$. Thus we can use the first $t$ of our $t+1$ stacks to output $\pi(1), \dots, \pi(n)$ in descending order, pushing these entries into the last stack so that they sit in increasing order (read top to bottom) before we read $\pi(n+1)$. We then use the first $t$ stacks to sort $\pi(n+1), \dots, \pi(2n)$, merging these entries with the contents of the final stack into the output as desired.

Murphy~\cite[Proposition~264]{murphy:restricted-perm:} gives a different argument to obtain a similar result. In his approach, we process all entries $\pi(1), \dots, \pi(2n)$ at once. The entries with values $1, 2, \dots, n$ are sorted and output using the last $t$ stacks (these are pushed onto the first stack but immediately popped off it). The entries with values $n+1, n+2, \dots, 2n$ are pushed into the first stack, where they remain temporarily. Once all entries with values $1, 2, \dots, n$ have been output, the entries $n+1, n+2, \dots, 2n$ sit ``upside down'' in the first stack, in the reverse order of the subsequence they form in~$\pi$, read top to bottom. We then sort and output these using the last $t$ stacks.

In fact, Knuth's argument and Murphy's argument are the sorting duals of each other. Suppose that the permutations~$\pi$ and~$\sigma$ can be sorted by~$t$ stacks in series. Knuth's argument shows that~${t+1}$ stacks in series can sort all \emph{horizontal juxtapositions} of the form $\begin{bmatrix}\rho\circ\pi&\sigma\end{bmatrix}$: permutations of length~${|\pi|+|\sigma|}$ in which the first $|\pi|$ entries are order-isomorphic to $\rho\circ\pi$ (the complement of~$\pi$) and the last $|\sigma|$ entries are order-isomorphic to~$\sigma$. Murphy's argument shows that $t+1$ stacks in series can sort all \emph{vertical juxtapositions} of the form $\begin{bmatrix} \pi\circ\rho&\sigma\end{bmatrix}^{\mathrm{T}}$: permutations of length $|\pi|+|\sigma|$ in which the largest $|\pi|$ values are order-isomorphic to $\pi\circ\rho$ (the reverse of~$\pi$) and the smallest $|\sigma|$ values are order-isomorphic to~$\sigma$.

To see the duality, substitute the sorting duals of~$\pi$ and~$\sigma$ into Murphy's construction. Since the sorting dual of~$\pi$ is $\rho \circ \pi^{-1} \circ \rho$, we obtain vertical juxtapositions of the form
\[
	\begin{bmatrix}
	(\rho \circ \pi^{-1} \circ \rho) \circ \rho
	\\[4pt]
	\rho \circ \sigma^{-1} \circ \rho
	\end{bmatrix}
	=
	\begin{bmatrix}
	\rho \circ \pi^{-1}
	\\[4pt]
	\rho \circ \sigma^{-1} \circ \rho
	\end{bmatrix}.
\]
The sorting duals of \emph{these} permutations are precisely the horizontal juxtapositions from Knuth's argument.

Since two stacks in series can sort every permutation of length $6$, it follows from these constructions that $t \ge 2$ stacks in series can sort every permutation of length $3 \cdot 2^{t-1}$. Atkinson~\cite[Corollary, p.10]{atkinson:sorting-permuta:} refined this bound to $7 \cdot 2^{t-2} - 1$, but the asymptotics are the same: roughly $\log_2 n$ stacks in series suffice to sort all permutations of length~$n$.

For a lower bound, observe that a single stack can sort precisely $C_n$ permutations of length~$n$, where $C_n$ denotes the~$n$th Catalan number. Thus $t$ stacks in series can sort at most $C_n^t$ permutations of length~$n$. This shows that roughly $\log_4 n$ stacks in series are required to sort all permutations of length~$n$.

There is a factor-of-two gap between these bounds, and it has not been significantly narrowed since Knuth first posed the following problem almost sixty years ago, which he rated $47$ out of $50$ in difficulty.

\begin{problem}[{Knuth~\cite[Exercise 5.2.4-20]{knuth:the-art-of-comp:3}}]
Determine the true rate of growth, as $n \to \infty$, of the number of stacks in series needed to sort every permutation of length~$n$.
\end{problem}

\subsection*{Three stacks}

Specializing to three stacks, we immediately run into problems. The exhaustive computational approach that works for two stacks becomes intractable, as computing
\[
	\Av(231) \circ \Av(231) \circ \Av(231)
\]
for even modest lengths is infeasible. We do not even know the length of the shortest basis element for the class of $3$-stack-sortable permutations, although Atkinson's bound shows that all permutations of length $7\cdot 2-1=13$ can be sorted, so this shortest basis element must have length at least~$14$. Elder and Vatter~\cite{elder:problems-and-co:} report that Elder and Waton wagered a beer on the problem, with Elder guessing $15$ and Waton guessing $22$ (``conjecturing,'' if one is feeling generous).

\begin{question}[{Waton; see Elder and Vatter~\cite{elder:problems-and-co:}}]
\label{ques:shortest:3ss}
What is the length of the shortest permutation that cannot be sorted by three stacks in series?
\end{question}

Murphy~\cite[Conjecture~265]{murphy:restricted-perm:} guesses more generally that $t$ stacks in series can sort all permutations of length up to $(t+1)!$, which would give $25$ as the length of the shortest permutation unsortable by three stacks in series. However, Murphy's conjecture can't be true for all~$t$, since $\log_4 (t+1)!$ grows faster than~$t$.

The known unsortable permutations are shockingly long, given these conjectured answers. Tarjan~\cite{tarjan:sorting-using-n:} reports that he constructed a permutation of length $41$ that cannot be sorted by three stacks in series, although he doesn't present it. Murphy~\cite[Proposition~262]{murphy:restricted-perm:} gives a general method to construct permutations that cannot be sorted by $t+1$ stacks in series, starting from a permutation that cannot be sorted by $t$ stacks. We sketch his argument.

Suppose that some permutation~$\beta$ of length $k$ cannot be sorted by $t$ stacks in series. Consider the permutation
\[
	\pi \;=\; \beta[\,\underbrace{\,\beta^{\mathrm{r}},\, \beta^{\mathrm{r}}, \dots,\, \beta^{\mathrm{r}}}_{\text{$|\beta|-1$ copies}}, 1\,],
\]
the inflation of~$\beta$ formed, loosely speaking, by replacing each entry of~$\beta$ except the last with a block of entries order-isomorphic to $\beta^{\mathrm{r}}$. We claim that~$\pi$ cannot be sorted by $t+1$ stacks in series. Indeed, we can never have all entries from a single $\beta^{\mathrm{r}}$ interval together in the first stack, because reading top to bottom they would form a copy of~$\beta$, and the remaining $t$ stacks could not sort them. Thus at least one entry from the first interval must exit the first stack before the first entry of the second interval enters, and similarly for subsequent intervals. Focusing on these $|\beta|-1$ entries together with the final entry of~$\pi$, we obtain a subsequence order-isomorphic to~$\beta$ that enters the last $t$ stacks in that order, and the last $t$ stacks cannot sort~$\beta$, so they cannot sort this subsequence.

Murphy's construction yields an unsortable permutation of length $|\beta|^2 - |\beta| + 1$. For three stacks, we start with a permutation of length $7$ (the shortest length not sortable by two stacks), giving an unsortable permutation of length $43$, which is longer than Tarjan's claimed example. Murphy~\cite[p.~329]{murphy:restricted-perm:} examines one such permutation and finds four entries that can be removed without making it sortable, yielding an unsortable permutation of length $39$. In his 1992 technical report, Atkinson~\cite[Lemma~5]{atkinson:sorting-permuta:} had already done much the same, although he obtained an unsortable permutation of length~$38$. This record has stood for over thirty years.

\subsection*{Atkinson's $(r,s)$-stacks}

In a 1998 paper, Atkinson~\cite{atkinson:generalized-sta:} introduced a natural generalization of stacks. With a standard stack, one may push an entry onto the top or pop an entry from the top. An \emph{$(r,s)$-stack} relaxes these constraints: one may push the next input entry into any of the top $r$ positions and pop any of the top $s$ entries to the output. Thus a $(1,1)$-stack is an ordinary stack.

These machines are symbol oblivious, as their operations do not depend on the values of the entries, and the reverse of an $(r,s)$-stack is an $(s,r)$-stack. Thus, by Proposition~\ref{prop:two-stack-dual}, we obtain the following.

\begin{proposition}[{Atkinson~\cite[Lemma~1.1]{atkinson:generalized-sta:}}]
\label{prop:rs-duality}
An $(r,s)$-stack sorts~$\pi$ if and only if an $(s,r)$-stack sorts its sorting dual~$(\pi^{\mathrm{rc}})^{-1}$.
\end{proposition}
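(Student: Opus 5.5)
The plan is to derive Proposition~\ref{prop:rs-duality} directly from Proposition~\ref{prop:two-stack-dual}. That requires two facts: that an $(r,s)$-stack is symbol oblivious, and that its reversed machine is an $(s,r)$-stack. Once both are in hand, the ``only if'' direction is immediate, and the ``if'' direction follows by symmetry, using that the sorting dual is an involution.

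\emph{Symbol obliviousness.} The allowed operations are to insert the next input entry into one of the top $r$ positions, or to remove one of the top $s$ entries to the output. Whether such an operation is permitted depends only on the current number of entries in the stack, never on their values. So relabeling the symbols and replaying the same operation sequence produces the relabeled output, which is exactly the definition.

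\emph{Reversal.} Take a run of the $(r,s)$-stack transforming $\pi$ into $\sigma$ and read it backwards in time. Reversing time swaps the roles of input and output, and it turns each insertion into a removal and vice versa. An insertion at depth $i\le r$ becomes, in reverse, a removal from depth $i\le r$. A removal from depth $j\le s$ becomes, in reverse, an insertion at depth $j\le s$. The stack contents at each moment are the same in both runs, read top to bottom. So the reversed run is a legal run of an $(s,r)$-stack transforming $\sigma^{\mathrm r}$ into $\pi^{\mathrm r}$, with the same input/output conventions as in the discussion preceding Proposition~\ref{prop:two-stack-dual}. Hence $M^{\mathrm r}$ is the $(s,r)$-stack. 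This is the only step needing care, namely checking the orientation: positions are measured from the top both before and after the reversal, so depth bounds transfer exactly.

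\emph{Conclusion.} By Proposition~\ref{prop:two-stack-dual}, if the $(r,s)$-stack sorts $\pi$, then the $(s,r)$-stack sorts $(\pi^{\mathrm{rc}})^{-1}$. For the converse, apply the same statement with the roles of $r$ and $s$ swapped to the permutation $(\pi^{\mathrm{rc}})^{-1}$. It remains to note that the sorting dual is an involution: $\rho\circ(\rho\circ\pi^{-1}\circ\rho)^{-1}\circ\rho=\rho\rho\pi\rho\rho=\pi$, since $\rho$ is an involution. This gives the ``if'' direction.
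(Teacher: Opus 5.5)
Your proof is correct and follows the paper's route: the paper also obtains this from Proposition~\ref{prop:two-stack-dual}, by observing that $(r,s)$-stacks are symbol oblivious and that the reverse of an $(r,s)$-stack is an $(s,r)$-stack. Your explicit treatment of the ``if'' direction, which uses that the sorting dual is an involution, fills in a step the paper leaves implicit.
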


In particular, the class of permutations sortable by an $(r,r)$-stack is closed under the sorting dual. We return to $(r,s)$-stacks later in this section.

\subsection*{Restricted containers ($\mathcal{C}$-machines)}

We now introduce a different generalization of stacks that captures many permutation classes. As we will see, this framework includes Atkinson's $(r,1)$- and $(1,s)$-stacks as special cases.

Let $\C$ be a permutation class. A \emph{$\C$-machine} is a container that holds entries subject to the constraint that, at all times, the entries in the container (read left to right) must be order-isomorphic to a member of $\C$. In analyzing these machines, it turns out to be more natural to study generation than sorting; we discuss this choice further below. Thus we take the input to be $12 \cdots n$, and the machine has three operations:
\begin{itemize}[noitemsep, topsep=0pt]
\item \emph{push}: remove the next entry from the input and place it anywhere in the container, provided the resulting arrangement is order-isomorphic to a member of $\C$;
\item \emph{pop}: remove the leftmost entry from the container and append it to the output;
\item \emph{bypass}: remove the next entry from the input and append it directly to the output.
\end{itemize}
The permutation \emph{generated} by a sequence of operations is the output once all entries have exited.

The classical stack is recovered by taking $\C = \Av(12)$. In this machine, entries in the container must be decreasing, so each push places the new entry on the left. Since pops also occur from the left, the bypass operation is redundant, and we recover the usual stack operations. As Knuth observed, this machine generates precisely $\Av(312)$.

By contrast, for the $\Av(21)$-machine, the entries in the container must be increasing, so each push places the new entry on the right. Since pops occur from the left, the bypass operation is now essential. This machine generates precisely $\Av(321)$: clearly it cannot generate $321$ or any permutation containing it, and conversely, to generate a $321$-avoiding permutation, output the left-to-right maxima with bypasses while passing the remaining entries (which must be increasing) through the container. (This is equivalent to generating with two queues in parallel.)

At the other extreme, taking $\C$ to be the class of all permutations allows entries to be placed anywhere in the container, and this machine generates all permutations.

The bypass operation is superfluous in some cases, as the stack example shows. However, including it greatly simplifies the basis theorem below. In practice, bypasses are used precisely to output the left-to-right maxima of the generated permutation.

\subsection*{The basis theorem}

The main structural result for $\C$-machines characterizes the classes they generate and makes it immediately apparent whether a class of interest can be generated by a $\C$-machine, and if so, which one. The proof is too simple to omit. In the statement, we use the notation $1 \ominus B = \{1 \ominus \beta : \beta \in B\}$.

\begin{theorem}[{Albert, Homberger, Pantone, Shar, and Vatter~\cite[Theorem 1.1]{albert:generating-perm:}}]
\label{thm:c-machine-basis}
For any set~$B$ of permutations, the $\Av(B)$-machine generates the class $\Av(1 \ominus B)$.
\end{theorem}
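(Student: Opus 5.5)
We prove two inclusions: every permutation the $\Av(B)$-machine generates avoids $1\ominus B$, and every permutation avoiding $1\ominus B$ can be generated by the machine.

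\emph{Generated permutations avoid $1\ominus B$.} Suppose the machine generates $\pi$ and that $\pi$ contains a copy of $1\ominus\beta$ with $\beta\in B$. Write $a$ for the entry playing the role of the leading $1$, so $a$ lies to the left of and above the entries $b_1,\dots,b_k$ that form the copy of $\beta$.

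The input is $12\cdots n$, and $a$ exceeds every $b_i$. So every $b_i$ left the input before $a$ did. Each $b_i$ appears after $a$ in the output, and $a$ had not yet entered the machine when $b_i$ left the input. Hence no $b_i$ was bypassed: each was pushed into the container.

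Consider the moment just before $a$ is read from the input. At that moment all the $b_i$ are in the container, since each has been pushed and none has yet been popped. Pops remove the leftmost entry, so the left-to-right order of entries in the container is exactly the order in which they will be output. Therefore the $b_i$ sit in the container in their output order, and so they form a copy of $\beta$. The container's contents must always be order-isomorphic to a member of $\Av(B)$, and classes are closed downward, so this is a contradiction.

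\emph{Permutations avoiding $1\ominus B$ are generated.} Take $\pi\in\Av(1\ominus B)$. We generate it as follows.
\begin{itemize}
\item Read the input in increasing order.
\item When the value $v$ arrives, if $v$ is a left-to-right maximum of $\pi$, first pop everything in the container that precedes $v$ in $\pi$, and then bypass $v$.
\item Otherwise, push $v$ into the container at the position consistent with its relative position in $\pi$ among the entries currently in the container.
\end{itemize}

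We must check that this procedure outputs $\pi$ correctly.
\begin{itemize}
\item Pops always remove exactly the next entries of $\pi$ in order. The container is kept sorted by position in $\pi$, and when the left-to-right maximum $v$ is output, every entry to its left in $\pi$ is smaller than $v$. Those entries have therefore already been read, and each either was output earlier or sits in the container.
\item Once all entries to the left of $v$ are gone, $v$ is the next entry of $\pi$, so bypassing it is correct.
\item An entry that is not a left-to-right maximum is smaller than some earlier left-to-right maximum. That maximum has not yet been read, so the entry cannot have been required for output before being pushed, and pushing it is valid.
\item At the end, pop the remaining entries.
\end{itemize}

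The container is never asked to violate $\Av(B)$. At any moment, let $m$ be the next left-to-right maximum of $\pi$ still to be read. Every entry in the container is smaller than $m$, because all read values are less than $m$. Every entry in the container also lies to the right of $m$ in $\pi$: any entry to the left of $m$ is output before $m$ is bypassed, and any entry in the container that lies to the left of $m$ would have to be smaller than some earlier left-to-right maximum already processed, hence already output.

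So the container's contents, in their $\pi$-order, form a pattern lying southeast of $m$. If that pattern contained some $\beta\in B$, then $\pi$ would contain $1\ominus\beta$, contrary to assumption. If no left-to-right maximum remains, the container holds entries that were all read after the global maximum $n$ was bypassed, and these lie to the right of and below $n$, so the same argument applies with $m=n$.

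The main care point is this last invariant: that everything in the container lies southeast of a single pending left-to-right maximum. The rest is routine bookkeeping.
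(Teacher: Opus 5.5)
Your first direction is correct and is essentially the paper's argument. The second direction has a genuine gap: the procedure as you state it fails, because you pop lazily. You pop the entries lying between two consecutive left-to-right maxima $u$ and $m$ of $\pi$ only when $m$ is read. But before that you push every value in the interval $(u,m)$, so those pushes happen while the container still holds entries positioned between $u$ and $m$.

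Your claimed invariant, that every container entry lies to the right of $m$, is therefore false. It breaks at the step ``smaller than some earlier left-to-right maximum already processed, hence already output.'' An entry between $u$ and $m$ is smaller than $u$, but under your rule it is not popped until $m$ arrives. Concretely, take $B=\{12\}$, so the container must be decreasing and the machine is a stack, and take $\pi=2143$, which avoids $1\ominus 12=312$. Your procedure pushes $1$ and then bypasses $2$ without popping, since $1$ comes after $2$ in $\pi$. It then must push $3$ to the right of $1$, which creates a copy of $12$ in the container.

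The missing idea is to pop eagerly, which is exactly the order of operations in the paper's proof. Right after bypassing $u$, pop all entries lying horizontally between $u$ and $m$; they are all already in the container, since their values are below $u$. Only then push the values in $(u,m)$, all of which lie to the right of $m$. With that change your invariant becomes true: at every push, everything in the container lies southeast of the pending maximum $m$. The rest of your bookkeeping then goes through.

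Your final remark is also slightly off. Nothing is read after $n$, so once $n$ is bypassed only pops remain, and deleting entries cannot create a copy of any $\beta\in B$.
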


\begin{proof}
The $\Av(B)$-machine cannot generate any permutation of the form $1 \ominus \beta$ for $\beta \in B$: at the moment when the maximum entry (which appears first in $1 \ominus \beta$) is next in the input, the container would need to hold a copy of~$\beta$, and it is not allowed to do so.

For the converse, we show how to generate a permutation~$\pi$ that avoids all permutations in $1\ominus B$. Let $i_1 < i_2 < \cdots < i_\ell$ be the positions of the left-to-right maxima of~$\pi$; note that $i_1 = 1$ since the first entry is always a left-to-right maximum.

We proceed iteratively through the left-to-right maxima. For the first, we push all entries with values less than $\pi(i_1)$ into the container in their correct relative order with respect to~$\pi$. This is possible because the entries lying to the southeast of $\pi(i_1)$ in~$\pi$ avoid $B$. We then bypass $\pi(i_1)$ to the output.

For each subsequent left-to-right maximum, observe that all entries lying horizontally between the previous left-to-right maximum and the current one are already in the container. We pop these entries to the output. Then we push the entries with values between the two maxima from the input into the container, placing them in their correct relative positions with respect to~$\pi$. Again, this is possible because the entries lying to the southeast of the current left-to-right maximum must avoid~$B$. We then bypass the current maximum to the output, and repeat.
\end{proof}

\subsection*{Sorting versus generating}

Unlike stacks, $\C$-machines are not even a little bit symbol oblivious: the available push operations depend entirely on the relative order of the entries in the container and the next entry to be input. Thus, we can't lean on Proposition~\ref{prop:sort-generate} to conclude that sorting and generating are symmetric. What would happen if we tried to sort with a $\C$-machine?

One answer is that sorting with $\C$-machines, as defined, would not be all that interesting. The only productive container configuration is increasing order, since any inversion in the container would persist to the output, resulting in a failure to sort. It follows that sorting with a $\C$-machine is equivalent to sorting with a \emph{priority queue}, in which one may push and pop entries, but instead of the last-in being popped (as in a stack) or the first-in being popped (as in a queue), the least-in, that is, the smallest entry, is popped. A priority queue of unlimited capacity can sort any permutation: simply push all the entries in, then pop them out in order.%
\footnote{The more interesting questions about priority queues concern the pairs $(\pi,\sigma)$ for which a priority queue can transform~$\pi$ into $\sigma$. A theorem of Atkinson and Thiyagarajah~\cite{atkinson:the-permutation:} states that there are precisely $(n+1)^{n-1}$ such pairs of permutations of length~$n$.}
Thus if $\C$ contains the identity of all lengths, the $\C$-machine can sort everything.

If $\C$ contains identity permutations only up to some length $k$, then sorting with the $\C$-machine is equivalent to sorting with a \emph{$(k+1)$-bounded priority queue}, which can hold at most $k+1$ entries at any time. (The ``$+1$'' accounts for the bypass operation.) Bounded priority queues have been studied by Atkinson and Tulley~\cite{atkinson:bounded-capacit:}, among others, though their sorting capabilities do not seem to have been written down explicitly. It is not difficult to see that a $(k+1)$-bounded priority queue can sort precisely the class $\Av(S_{k+1} \ominus 1)$, that is, the permutations avoiding all patterns of length~${k+2}$ that end with~$1$. Another way to describe this class is that it consists of the permutations with maximum drop size at most $k$, where the \emph{maximum drop size} of~$\pi$ is $\max\{i - \pi(i)\}$. These classes are already very well understood; in fact, Chung, Claesson, Dukes, and Graham~\cite{chung:descent-polynom:} have even determined their descent polynomials. In short, sorting with $\C$-machines as defined does not lead anywhere new.

An alternative approach to sorting with $\C$-machines would be to modify the definition of the machines to make sorting more natural. If a $\C$-machine can generate~$\pi$, then the reversed $\C$-machine (which pushes only at the left but allows pops from anywhere in the container) can transform $\pi^{\mathrm{r}}$ into $\id^{\mathrm{r}}$. By applying appropriate symmetries to $\C$, one could therefore study sorting by reversed $\C$-machines instead of generating by $\C$-machines. But this is clearly symmetric to generating with $\C$-machines, and generating is easier to talk about.

\subsection*{Atkinson's $(r,1)$- and $(1,s)$-stacks as $\C$-machines}

An $(r,1)$-stack allows pushing into any of the top $r$ positions but only popping from the top. By ``rotating'' this stack $90^\circ$ counterclockwise, we see that it is equivalent to the $\Av(B)$-machine where~$B$ consists of all permutations of length $r+1$ that end with their largest entry. This allows one to insert the next entry from the input (which is larger than every entry currently in the container) into any of the first $r$ positions of the container, and then to pop from the first position of the container. Letting $S_r$ denote the set of all permutations of length $r$, we see that the $(r,1)$-stack is equivalent to the $\Av(S_r \oplus 1)$-machine. By Theorem~\ref{thm:c-machine-basis}, this machine generates the class $\Av\bigl(1 \ominus (S_r \oplus 1)\bigr)$, which reproves a result of  Atkinson~\cite[Theorem~2.1]{atkinson:generalized-sta:}. By the duality of Proposition~\ref{prop:rs-duality}, $(1,s)$-stacks are also equivalent to $\C$-machines.

Atkinson~\cite[Section~3]{atkinson:generalized-sta:} went on to enumerate these classes, finding their algebraic generating functions and asymptotics. The enumeration of these classes also appears implicitly in the work of Kremer~\cite{kremer:permutations-wi:,kremer:postscript:-per:}, who considers, for each $r \ge 1$ and fixed indices $j, k \in [r+2]$, classes with bases
\[
	B_{j,k} = \{\beta \in S_{r+2} : \text{$\beta(j) = r+1$ and $\beta(k) = r+2$}\}.
\]
Her main result~\cite[Theorem~1]{kremer:postscript:-per:} shows that all classes of the form $\Av(B_{j,k})$ with $|j - k| \le 2$, or $k = 1$, or $k = r+2$, have isomorphic generating trees, and thus are Wilf-equivalent. The underlying recurrences also appear in Sulanke's work~\cite{sulanke:three-recurrenc:} on coloured parallelogram polyominoes. (We caution readers that Kremer's generating function as printed contains errors; Atkinson's paper should be consulted for the correct generating function.) For $r = 1$, we obtain the Catalan numbers. For $r = 2$, the enumeration gives the large Schr\"oder numbers (\OEISlink{A006318}), which arose earlier in this paper in the context of separable permutations; Kremer's classes are Wilf-equivalent to the separable permutations but not symmetric to them. For $r = 3$, we obtain \OEISlink{A054872}.

\subsection*{Sorting $1342$-avoiders}

As mentioned in Section~\ref{sec:wilf}, Atkinson, Murphy, and Ru\v{s}kuc~\cite{atkinson:sorting-with-tw:} showed that the class sortable by two \emph{ordered stacks} in series is Wilf-equivalent to $\Av(1342)$. Here an \emph{ordered stack} is one where the contents must remain increasing when read from top to bottom; in sorting with stacks in series, the final stack must always be ordered, but the previous stacks do not need to be. At \emph{Permutation Patterns 2007}, B\'ona asked~\cite[Question~4]{vatter:problems-and-co:}:
\begin{quote}
Is there a natural sorting machine that can sort precisely the class $\Av(1342)$?
\end{quote}

Theorem~\ref{thm:c-machine-basis} shows that the answer is essentially yes: the class $\Av(4213)$ is generated by the $\Av(213)$-machine. Here we have replaced $\Av(1342)$ by a symmetry and also swapped sorting for generating, but neither change is substantive. However, what B\'ona actually had in mind was a more satisfying machine-level explanation of the Wilf-equivalence, and that remains open:

\begin{problem}
\label{prob:bona-bijection}
Find a bijection between $\Av(1342)$ and the class sortable by two ordered stacks in series that is witnessed by a correspondence between the operation sequences of the associated machines.
\end{problem}

\subsection*{Enumeration and D-finiteness}

The $\C$-machine framework leads to functional equations for generating functions. In favorable cases, these yield explicit formulas; in others, they allow efficient computation of many terms via dynamic programming.

A striking example is $\Av(4231, 4123, 4312)$. This class can be generated by a $\C$-machine (all basis elements begin with their maximum entries), and Albert, Homberger, Pantone, Shar, and Vatter~\cite{albert:generating-perm:} computed $5000$ terms of its counting sequence (\OEISlink{A257562}). Despite this abundance of data, no algebraic differential equation satisfied by the generating function has been found.

The counterexamples to the Noonan--Zeilberger conjecture~\cite{noonan:the-enumeration:} constructed by Garrabrant and Pak~\cite{garrabrant:permutation-pat:} have extremely large bases. A more compact counterexample would be nice to have, and $\Av(4231, 4123, 4312)$, with its basis of just three short permutations, appears to be a strong candidate.

\begin{conjecture}[Albert, Homberger, Pantone, Shar, and Vatter~{\cite[Conjecture~5.3]{albert:generating-perm:}}]
The generating function for $\Av(4231, 4123, 4312)$ is not differentially algebraic.
\end{conjecture}

Amusingly, the superclass $\Av(4231, 4312)$ is enumerated by the large Schr\"oder numbers. This was conjectured by Stanley and first proved by Kremer~\cite[Proposition~11]{kremer:permutations-wi:}, although this class is not a member of her large family of Wilf-equivalent classes. Instead, she showed specifically that the generating tree for the symmetric class $\Av(2134, 1324)$ is isomorphic to the generating trees of her large family.

The class $\Av(4231)$, meanwhile, is symmetric to the notorious class $\Av(1324)$ mentioned in the introduction. The current record for enumerating this class is $50$ terms, computed by Conway, Guttmann, and Zinn-Justin~\cite{conway:1324-avoiding-p:}. Thus in the chain
\[
	\Av(4231, 4123, 4312) \;\subseteq\; \Av(4231, 4312) \;\subseteq\; \Av(4231),
\]
the first class is computationally tractable but its generating function appears poorly behaved (and its enumeration is not a Stieltjes moment sequence, since its Hankel determinants eventually become negative~\cite[p.~40]{bostan:stieltjes-momen:}), the second has an algebraic generating function, and no one knows what to do with the third.

\subsection*{The $(2,2)$-stack}

Returning to Atkinson's $(r,s)$-stacks, the $(2,2)$-stack is the simplest case not captured by the $\C$-machine framework: it allows pushing into either of the top two positions and popping from either of the top two positions. Atkinson established the basis for this class.

\begin{theorem}[{Atkinson~\cite[Theorem~4.1]{atkinson:generalized-sta:}}]
\label{thm:22-stack-basis}
The class of $(2,2)$-stack-sortable permutations is
\[
	\Av(23451,\, 23541,\, 32451,\, 32541,\, 245163,\, 246153,\, 425163,\, 426153).
\]
\end{theorem}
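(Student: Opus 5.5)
Two inclusions are needed: each of the eight listed permutations is unsortable, and every permutation avoiding all eight can be sorted. The first is direct. Sortable permutations form a class (ignore absent entries and replay the operations), so it suffices that each basis element is unsortable. For the length-five ones $23451, 23541, 32451, 32541$, the point is that $1$ comes last. Before $1$ can be output, all of the larger entries $2,3,4,5$ must already sit in the container. The smallest of them, $2$, must come out next after $1$. But $2$ was pushed early, among the first two entries, and then $4$ and $5$ were pushed after it. With pushes allowed only into the top two positions, one checks that $2$ necessarily ends up at depth at least three, so it cannot be popped in time. The length-six elements $245163$, $246153$, $425163$, $426153$ come in dual pairs. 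The machine is self-dual by Proposition~\ref{prop:rs-duality}, so the basis is closed under $\pi\mapsto(\pi^{\mathrm{rc}})^{-1}$. Hence I would verify only one representative per orbit by a short case analysis of where $1$, $2$, $3$ can be placed, using the same depth-counting idea.

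The converse is the real content. I would argue by induction on $n$, using an explicit greedy strategy. Maintain the invariant that the stack contents, apart from the top two positions, are increasing from top to bottom. Whenever the next required output $m$ is within the top two, pop it. Otherwise push the next input into whichever of the top two slots keeps the invariant, preferring the slot that places the larger entry lower. Failure of this strategy means that at some moment we are forced to create an entry at depth at least three that is larger than something beneath it and needed sooner. I would then trace back the entries responsible:
\begin{itemize}[noitemsep, topsep=0pt]
\item the two items occupying the top slots,
\item the entry that got buried,
\item the smaller entry still to come (playing the role of the final $1$ or $3$ in the patterns).
\end{itemize}
From these I would extract an occurrence of one of the eight patterns.

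It may be cleaner to use the symmetry first: by the duality of Proposition~\ref{prop:rs-duality}, I can assume without loss of generality that the obstruction occurs at the push side rather than the pop side. Then the analysis splits into two cases. In the first case, the blocking configuration is already present in the input prefix, which yields the five-letter patterns $\{23,32\}\{45,54\}1$. In the second case, the greedy strategy could have resolved the conflict by an earlier pop but the needed element had not yet arrived, which yields the six-letter patterns. Finally, I need to justify that greedy is optimal, i.e.\ that if any operation sequence sorts $\pi$ then the greedy one does. I would do this by an exchange argument: take any successful sequence and move each pop of the next-needed element as early as possible, then normalize each push choice without increasing the depths of the small entries.

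The main obstacle is this completeness direction, specifically proving that the canonical strategy is optimal and that every way it can fail exposes exactly one of the eight patterns. The extraction of the length-six patterns is where I expect the case analysis to be delicate. Before writing it out, I would first check the statement computationally through length about nine, to guide the cases.
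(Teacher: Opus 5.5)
\textbf{The completeness direction.} This direction is the entire content of the theorem; the paper only cites Atkinson for it. In your proposal it is not established, and the specific greedy you propose does not work.

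First, your invariant never distinguishes the two push slots. Inserting the next entry into position~1 or position~2 sends the same entry, the former second entry, to depth three. So the entries below the top two are identical either way, and the only real choice is which of the old top and the new entry sits in position~2.

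Second, your tie-break (larger entry lower) already fails on $2341$. It builds $2$, then $2,3$, then $2,4,3$ (top first). Pushing $1$ then forces $4$ and $3$ into depths three and four with $4$ above, which your criterion declares a failure. But $2341$ has length four, so it avoids all eight patterns, and it is sortable. One run goes through $4,2,3$ and then $1,4,2,3$, which keeps your invariant. Another goes through $1,2,4,3$ (pop $1$, then $2$, then $3$ from the second position, then $4$), which breaks it. So:
\begin{itemize}
\item the invariant is not necessary;
\item the opposite tie-break (larger on top, giving $3,2$ and then $4,3,2$) fails on the same input, so no rule based only on the relative order of the two candidates can work, and the choice must at least take account of the entry about to become third;
\item there is no pattern to extract from this ``failure.''
\end{itemize}

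Third, the invariant does not detect all dead configurations. For example, $5,4,2,3$ (top first) satisfies it, yet $2$ lies beneath two larger entries and can never be popped in time.

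The exchange step you describe (normalizing push choices ``without increasing the depths of the small entries'') is exactly where the difficulty lies, because the two choices trade off which entry gets buried next. The appeal to duality to place the obstruction on the ``push side'' is also unsupported: the sorting dual reverses an entire run, and it does not map a failure of your particular greedy to a failure of the same greedy. As it stands, the case analysis that is supposed to produce the eight patterns is only promised.

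\textbf{The easy direction.} This also needs a fix. In $23451$ the entry $2$ need not be buried: pushing $3$, $4$, $5$ each into the second position gives $2,5,4,3$. The real obstruction is as follows. Nothing can be popped before $1$ arrives. Whichever of the first two entries lies lower is in position~2 when the third entry arrives. The next two pushes therefore leave it at depth four, beneath both large entries, which must be output after it. The same argument covers $23541$, $32451$ and $32541$.

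Finally, the length-six elements do not come in dual pairs. The permutations $245163$ and $426153$ are self-dual, and only $246153$ and $425163$ are exchanged, so three cases must be checked.
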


By Proposition~\ref{prop:rs-duality}, this class is closed under the sorting dual. This can also be seen from the basis, as every element appears alongside its sorting dual:
\[
\begin{tabular}{r|cccccc}
\text{permutation} 
  & 23451 
  & 23541 
  & 32541 
  & 245163 
  & 246153 
  & 426153 \\\hline
\text{sorting dual}
  & \text{itself}
  & 32451
  & \text{itself}
  & \text{itself}
  & 425163
  & \text{itself}
\end{tabular}
\]
It would of course be desirable to have a general description of the basis of $(r,s)$-stack-sortable permutations, or even simply to know whether these bases are all finite.

\begin{problem}
\label{prob:rs-basis}
Characterize the basis of the class sortable by an $(r,s)$-stack for general~${r, s \ge 2}$.
\end{problem}

The permutations sortable by a $(2,2)$-stack are counted by \OEISlink{A393395},
\[
	1, 2, 6, 24, 116, 628, 3636, 21956, 136428, 865700, 5583580, 36490740, \dots.
\]
Atkinson reported a conjectured expression from the OEIS Superseeker for the generating function of these permutations but did not prove it. Using Combinatorial Exploration~\cite{albert:combinatorial-e:} and working from the basis in Theorem~\ref{thm:22-stack-basis}, Pantone (personal communication) proved Atkinson's conjecture, establishing that the generating function for this class has minimal polynomial
\[
	2xf^3 - (2x+3)f^2 - (x-7)f - 4.
\]
(This differs slightly from Atkinson's presentation because it includes the constant term $1$ for the empty permutation, while Atkinson did not.) It would still be of interest to derive this enumeration directly from the sorting mechanism.

\begin{problem}
Derive the generating function for the class of $(2,2)$-stack-sortable permutations from the structure of the $(2,2)$-stack.
\end{problem}

Given that all known enumerations of $(r,s)$-stack-sortable classes are algebraic, one might ask how far this extends.

\begin{question}
Does the class of permutations sortable by an $(r,s)$-stack have an algebraic generating function for all $r, s \ge 1$?
\end{question}

\subsection*{Generalizing $\C$-machines}

The $(2,2)$-stack is not a $\C$-machine: it allows popping from either of the top two positions, whereas $\C$-machines pop only from the leftmost position. This suggests a natural generalization of $\C$-machines: allow the pop operation to remove any of the leftmost $s$ entries from the container, rather than just the leftmost one. Call such a machine an \emph{extended $\C$-machine with $s$-pop}, or a \emph{$(\C, s)$-machine} for short. The $\C$-machines considered earlier are then $(\C, 1)$-machines, and the $(2,2)$-stack can be viewed as an $(\Av(123, 213), 2)$-machine.

The basis theorem (Theorem~\ref{thm:c-machine-basis}) gives a complete characterization of the classes generated by $(\C, 1)$-machines. Does a similar result hold more generally?

\begin{problem}
\label{prob:pop-k-basis}
Is there a basis theorem for $(\C, s)$-machines analogous to Theorem~\ref{thm:c-machine-basis}?
\end{problem}

Even partial progress would be valuable. For instance, if $\C$ is finitely based, does every $(\C, s)$-machine generate a class with a finite basis?

\bigskip
\textbf{Acknowledgements}
\medskip

I am grateful to Jay Pantone for numerous extremely helpful discussions about the material presented here, to Bruce Sagan for catching an error in an earlier draft, and to Natasha Blitvi\'{c} and Einar Steingr\'{\i}msson for helpful correspondence.

\setlength{\bibsep}{4pt}

\bibliographystyle{acm}
\bibliography{pp-2025-problems}

\end{document}